\documentclass[reqno]{amsart}

\usepackage{amsfonts}
\usepackage{amssymb}

\usepackage{amscd}
\usepackage{pictexwd,dcpic}
\usepackage{graphicx}
\usepackage{tikz}
\usepackage{fullpage}
\usepackage{hyperref}
\hypersetup{
    colorlinks=true,
    linkcolor=blue,
    filecolor=magenta,
    urlcolor=cyan,
    citecolor=cyan,}
\usepackage[nameinlink,capitalize]{cleveref}
\usepackage{enumitem}

\title{Tensor product surfaces and quadratic syzygies}

\author{Matthew Weaver}
\address{School of Mathematical and Statistical Sciences, Arizona State University, Wexler Hall, Tempe AZ 85281}
\email{matthew.j.weaver@asu.edu}

\date{}


\newtheorem{thm}{Theorem}[section]
\newtheorem*{thm-nonum}{Theorem}

\newtheorem{prop}[thm]{Proposition}
\newtheorem{lemma}[thm]{Lemma}
\newtheorem{cor}[thm]{Corollary}
\numberwithin{equation}{section}

\theoremstyle{definition}
\newtheorem{rem}[thm]{Remark}
\newtheorem{set}[thm]{Setting}

\newtheorem{defn}[thm]{Definition}

\newtheorem{ex}[thm]{Example}

\newtheorem{conj}[thm]{Conjecture}

\Crefname{thm}{Theorem}{Theorems}
\Crefname{ex}{Example}{Examples}


\def\K{\mathcal{K}}

\def\O{\mathcal{O}}

\def\P{\mathbb{P}}

\def\S{\mathcal{S}}

\def\Z{\mathcal{Z}}

\def\T{\underline{T}}
\def\f{\underline{f}}

\def\dim{\mathop{\rm dim}}

\def\im{\mathop{\rm im}}
\def\hgt{\mathop{\rm ht}}
\def\rk{\mathop{\rm rank}}

\def\Span{\mathop{\rm Span}}

\def\bideg{\mathop{\rm bideg}}
\def\syz{\mathop{\rm syz}}

\usepackage[all]{xy}

\usepackage{nicematrix}


\begin{document}

\begin{abstract}
For $U\subseteq H^0(\O_{\P^1\times \P^1}(a,b))$ a four-dimensional vector space, a basis $\{p_0,p_1,p_2,p_3\}$ of $U$ defines a rational map $\phi_U:\,\P^1\times \P^1 \dashrightarrow \P^3$. The tensor product surface associated to $U$ is the closed image $X_U$ of the map $\phi_U$. These surfaces arise within the field of geometric modelling, in which case it is particularly desirable to obtain the implicit equation of $X_U$. In this paper, we study $X_U$ via the syzygies of the associated bigraded ideal $I_U=(p_0,p_1,p_2,p_3)$ when $U$ is free of basepoints, i.e. $\phi_U$ is regular. Expanding upon work of Duarte and Schenck \cite{DS16} for such ideals with a linear syzygy, we address the case that $I_U$ has a quadratic syzygy.
\end{abstract}

\maketitle


\section{Introduction}

A classical problem within algebraic geometry is to find the implicit equations of the image or graph of a rational map between projective spaces. This so-called \textit{implicitization problem} has been studied to great length by algebraic geometers and commutative algebraists alike. More recently, the problem has gained interest within the geometric modeling community for its applications to computer-aided geometric design (CAGD), see e.g. \cite{CGZ00,SC95,SGD97,SSQK94}. In this context, knowledge of the implicit equations of a curve or surface allows for more efficient computation and geometric rendering. For instance, determining whether a point lies on a surface is trivial given an implicit form, but requires solving a possibly large polynomial system of equations given a parametric form.

In this context, two situations often considered are rational maps $\P^2 \dashrightarrow \P^3$ and $\P^1\times \P^1\dashrightarrow \P^3$. Surfaces defined as the image of the first map are \textit{triangular surfaces}, whereas surfaces defined as the image of the second are called \textit{tensor product surfaces}. Implicitization of these surfaces has been studied extensively using a variety of techniques, such as Gr\"obner bases, resultants, and syzygies. Whereas these first two methods are classical and straightforward to implement, they are typically computationally intensive and slow. In contrast, syzygy techniques, and methods borrowed from the study of \textit{Rees algebras}, are much more effective, see e.g. \cite{Botbol11,BDD09,BC05,BJ03}. One such tool in this realm is the \textit{approximation complex} $\Z$ introduced by Herzog, Simis, and Vasconcelos \cite{HSV82,HSV83}, which has proven particularly useful for implicitization purposes \cite{Botbol11, BC05, Chardin06}. We refer the reader to \cite{Cox03} for a wonderful overview of syzygy methods used for implicitization.

In this paper, we study tensor product surfaces and adopt the following setting. Let $R=k[s,t,u,v]$ be a polynomial ring over an algebraically closed field $k$, bigraded by setting $\bideg s,t =(1,0)$ and $\bideg u,v = (0,1)$. We note that the global sections $H^0(\O_{\P^1\times \P^1}(a,b))$ correspond to the bigraded components $R_{a,b}$ of $R$. Let $U\subseteq R_{a,b}$ denote a subspace with basis $\{p_0,p_1,p_2,p_3\}$, such that $p_0,p_1,p_2,p_3$ have no common zeros on $\P^1\times \P^1$, i.e. $U$ is \textit{basepoint free}. With this, consider the regular map 
$$\phi_U:\,\P^1\times \P^1 \longrightarrow \P^3$$
defined by $\{p_0,p_1,p_2,p_3\}$ and let $X_U = \phi_U(\P^1\times \P^1) \subseteq \P^3$. Writing $I_U=(p_0,p_1,p_2,p_3)$, the assumption that $U$ is free of basepoints is equivalent to $\sqrt{I_U} = (s,t)\cap (u,v)$.

From \cite{BDD09}, it is well known that the implicit equation of $X_U$ can be determined from the approximation complex $\Z$ on a generating set of $I_U$ (see \Cref{Prelim Section} for details). As this complex relates to the module of syzygies of $I_U$, knowledge of a free resolution of $I_U$ is particularly fruitful in this direction. In \cite{SSV14}, tensor product surfaces $X_U$ for $U\subset R_{2,1}$ are studied through a free resolution of $I_U$, of which the possible shapes are determined. There it is noted that the existence of a \textit{linear} syzygy, in bidegree $(0,1)$ or $(1,0)$, on $I_U$ yields certain constraints. This idea is further developed in \cite{DS16}, where this phenomenon is shown to extend beyond generation in bidegree $(2,1)$. Furthermore, in \cite{DS16} it is shown how the implicit equation of $X_U$ can be determined from the presence of a linear syzygy, without the need of a full resolution of $I_U$. In this direction, we show in the present article that a similar result can be achieved if $I_U$ has a \textit{quadratic} syzygy.

The key aspect in the arguments of \cite{DS16} is that a syzygy in bidegree $(0,1)$, or $(1,0)$, allows one to construct a specific generating set of $I_U$. This, in turn, allows for more detailed study of the remaining syzygies, with respect to this generation. In particular, one may produce a subset of syzygies which determines the bigraded strand $\Z_{2a-1,b-1}$ of the approximation complex, and hence the implicit equation of $X_U$, following \cite{Botbol11}.

In the case that $I_U$ has a syzygy in either bidegree $(0,2)$ or $(2,0)$, we show that a similar phenomenon occurs, within a handful of cases. The main innovation presented here is the construction of a particular subspace $V \subseteq U$, determined by the quadratic syzygy (see \Cref{Quad Syz Section} for details), with $\dim V$ serving as the invariant for each case. The main results of this article, \Cref{Dim 2 main result thm,Dim 3 main result thm}, are summarized as follows.

\begin{thm-nonum}
Assume that $U\subseteq H^0(\O_{\P^1\times \P^1}(a,b))$ is basepoint free, with $b\geq 3$. Let $I_U$ denote the ideal of $U$, and assume that $I_U$ has a minimal first syzygy $Q$ in bidegree $(0,2)$, and no linear syzygy. Write $V$ to denote the subspace of $U$ associated to the syzygy $Q$.
\begin{enumerate}
    \item[(i)] If $\dim V=2$, then $I_U$ has two syzygies $S_1,S_2$ of bidegree $(a,b-2)$ such that $\langle Q,S_1,S_2\rangle$ determines the first differential $d_1$ of $\Z_{2a-1,b-1}$.

    \item[(ii)] If $\dim V=3$, then $I_U$ has one syzygy $S_1$ of bidegree $(a,b-2)$ and two syzygies $S_2,S_3$ of bidegree $(a,b-1)$ such that $\langle Q,S_1,S_2,S_3\rangle$ determines the first differential $d_1$ of $\Z_{2a-1,b-1}$.
\end{enumerate}
Moreover, following \cite{Botbol11}, the determinant of a $2ab\times 2ab$ matrix representation of $d_1$ is a power of the implicit equation $F$ of $X_U$.
\end{thm-nonum}

By symmetry, a similar result holds if $I_U$ has a syzygy in bidegree $(2,0)$ and $a\geq 3$. The key aspect of the result above is the formulation of the subspace $V$, from a suitable basis of which the additional syzygies are constructed. Furthermore, the description of these syzygies is formulaic, and so this process may be easily implemented into a computer algebra system, such as \texttt{Macaulay2} \cite{Macaulay2}. In particular, the methods presented here yield more efficient computation of the implicit equation of $X_U$, in this setting.

To illustrate this, we consider the following, which will be our running example for the case that $\dim V=2$.

\begin{ex}\label{Intro example dim V=2}
Suppose that
$$U=\{s^2u^3+t^2u^2v,s^2uv^2+t^2v^3,s^2v^3,t^2u^3\}\subseteq H^0(\O_{\P^1\times \P^1}(2,3))$$
and let $I_U$ denote the ideal associated to $U$. A computation shows that $I_U$ has a first syzygy in degree $(0,2)$. Moreover, upon constructing the subspace $V\subseteq U$, further computations show that $\dim V=2$ and also that $I_U$ has seven minimal first syzygies in bidegrees
\[
\begin{array}{ccccccc}
(0,2), & (2,1), & (2,1), &(0,4), &(2,3), &(4,2), &(6,1).
\end{array}
\]
Following \Cref{Dim 2 main result thm}, the syzygies of bidegree $(0,2)$, $(2,1)$, and $(2,1)$ are the columns of the matrix 

\begin{equation}\label{intro example syzygies}
M=\begin{bmatrix}
      v^2&0&t^2u\\
      -u^2&s^2v&0\\
      0&-s^2u-t^2v&0\\
0&0&-s^2u-t^2v\end{bmatrix}    
\end{equation} 
and moreover, these syzygies determine the first differential of $\Z_{(2a-1,b-1)} = \Z_{3,2}$. Hence by \Cref{Botbol implicit eqn}, these syzygies yield the implicit equation of $X_U$.
\end{ex}

We continue this example in \Cref{dim 3 section}, verifying the construction of the two syzygies in (\ref{intro example syzygies}) of bidegree $(2,1)$ from the syzygy in bidegree $(0,2)$. Additionally, we show precisely how a matrix representation of the differential $d_1$ is obtained. Following \Cref{Botbol implicit eqn}, the determinant of the resulting $12\times 12$ matrix is then a power of the implicit equation of $X_U$.

We briefly describe how this paper is organized. In \Cref{Prelim Section}, we provide the preliminary material required for the scope of this article. We review the construction of the approximation complex of \cite{HSV82,HSV83} and recall the techniques of \cite{Botbol11}, using this complex for the purpose of multigraded implicitization. In \Cref{Quad Syz Section}, we introduce the main setting of this article, and show that a syzygy in bidegree $(0,2)$ implies constraints on the generation of $I_U$. The subspace $V\subseteq U$ is introduced, and we consider two cases based on its dimension. In \Cref{dim 2 section}, we address the first case where $\dim V=2$, and produce a pair of additional syzygies that determine the implicit equation of $X_U$. In \Cref{dim 3 section}, we consider the case when $\dim V=3$ and produce a set of three additional syzygies which determine the implicit equation of $X_U$ in this setting. We conclude the paper in \Cref{Questions section} with some further observations and questions related to the results presented here.


\section{Preliminaries}\label{Prelim Section}

Here we briefly describe the preliminary material necessary for this paper. We begin by introducing the \textit{approximation complex} $\Z$ \cite{HSV82,HSV83} associated to the ideal $I_U$. Moreover, we then recall the applications of this complex to the implicitization of tensor product surfaces, as outlined in \cite{Botbol11}. We refer the reader to \cite{Chardin06} for a nice overview of the techniques presented here.

\subsection{Approximation Complex}

We recall the construction of the \textit{approximation complex} $\Z$, introduced by Herzog, Simis, and Vasconcelos \cite{HSV82,HSV83}. This complex may be defined more generally, but for our purposes we consider the following setting. Let
$$I= (f_0,\ldots,f_n) \subseteq R=k[x_0,\ldots,x_d]$$
be a homogeneous ideal of $R$, and consider the Koszul complex $\K(\f)$ on the sequence $\f=f_0,\ldots,f_n$, with differentials $d_i^f$. Moreover, for new indeterminates $\T=T_0,\ldots,T_n$, let $S=k[T_0,\ldots,T_n]$ and consider the Koszul complex $\K(\T)$ with differential $d_i^S$. We construct a hybrid complex from this data.

\begin{defn}
Writing $Z_i = \ker d_i^f$ to denote the $i^{\text{th}}$ Koszul cycle, the approximation complex $\Z$ is the complex 
$$\Z\,:\quad \cdots \rightarrow \Z_{i+1}\overset{d_{i+1}}{\longrightarrow}\Z_{i}\overset{d_{i}}{\longrightarrow} \Z_{i-1}\overset{d_{i-1}}{\longrightarrow} \cdots $$
where $\Z_i = S\otimes_k Z_i$ and $d_i = d_i^S$.
\end{defn}

A direct computation shows that $d_i^f d_{i+1}^S+d_{i+1}^S d_i^f =0$. Hence for any $g\in Z_i$, we see that $d_i^f d_{i+1}^S (g) =-d_{i+1}^S d_i^f(g) =0$, and so $d_i(g) \in \Z_{i-1}$ and these maps are well defined. Moreover, the fact that $\Z$ is a complex follows as its differentials are inherited from $\K(\T)$. Much like the Koszul complex, the approximation complex $\Z$ depends only on the ideal $I$, and not the choice of generating set.

Whereas the higher homology of $\Z$ is often obscure, we note that the zeroth homology is familiar. Notice that $Z_0=R$ and $Z_1=\syz(I)$, the module of syzygies on $I$. It follows that the first differential $d_1$ of $\Z$ maps $d_1\,:S\otimes_k\syz(I) \rightarrow S\otimes_k R$ by
\begin{equation}\label{d_1 syz map}
(a_0,\ldots,a_n)\longmapsto a_0T_0+\cdots+a_nT_n.
\end{equation}
With this, we see that $H_0(\Z) \cong \S(I)$, where $\S(I)$ is the \textit{symmetric algebra} of $I$.

\begin{rem}
With the description in (\ref{d_1 syz map}), one may determine the image of $d_1$ from a free resolution, or even a free presentation, of $I$. Indeed, if $R^m\overset{\varphi}{\rightarrow} R^{n+1} \rightarrow I\rightarrow 0$ is such a presentation, with syzygy matrix $\varphi$, then $\im d_1 = (\ell_1,\ldots,\ell_m) \subset R[T_0,\ldots,T_n] \cong S\otimes_k R$, where $[\ell_1\ldots \ell_m] = [T_0\ldots T_n]\cdot \varphi$.
\end{rem}

With this, we note that it suffices to understand the syzygy matrix $\varphi$ to determine the first differential $d_1$ of $\Z$. We make use of this observation throughout this article.

\subsection{Multigraded Implicitization}

We now recall the applications of the approximation complex $\Z$ to the implicitization of tensor product surfaces. This complex has been used in multiple instances for implicitization purposes \cite{BDD09,BC05,Chardin06}, but for conciseness, we refer to the tools developed in \cite{Botbol11}.

Notice that $R\otimes_k S$ is naturally bigraded, hence we
may take graded strands of $\Z$. Indeed, for $\nu$ a fixed degree within the grading of $R$, the complex $\Z_\nu$ is
$$\Z_\nu\,: \quad \cdots \longrightarrow S\otimes_k (Z_i)_\nu \overset{d_i}{\longrightarrow} S\otimes_k (Z_{i-1})_\nu \overset{d_{i-1}}{\longrightarrow} \cdots.$$
Moreover, we note that if $R$ is bigraded, as in the proceeding sections, one may also take a \textit{bigraded} strand $\Z_\nu$ consisting of $S$-modules as well.

\begin{lemma}[{\cite[7.3]{Botbol11}}]\label{Botbol det Z}
Let $U=\{p_0,p_1,p_2,p_3\} \subset R_{a,b}$ and let $\phi_U:\,\P^1\times\P^1 \dashrightarrow \P^3$ denote the rational map defined by $U$. Assume that either $\phi_U$ has no basepoints or $\phi_U$ has finitely many basepoints that are locally a complete intersection. Let $\nu=(2a-1,b-1)$ (equivalently $\nu =(a-1,2b-1)$) and let $\Delta_\nu= \det \Z_\nu$. Then
$$\deg (\Delta_\nu) = 2ab - \dim (H_2)_{4a-1,3b-1}.$$
Moreover, the differential $d_1:\, (\Z_1)_\nu \rightarrow (\Z_0)_\nu$ has a matrix representation which is square of size $2ab\times 2ab$ if and only if $(H_2)_{4a-1,3b-1}=0$.
\end{lemma}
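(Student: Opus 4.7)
The plan is to apply the general framework of \cite{Botbol11} for computing MacRae determinants of strands of approximation complexes in the multigraded setting; the argument decomposes into three steps.

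\textbf{Step 1 (Source and target ranks).} Each $(\Z_i)_\nu = S\otimes_k (Z_i)_\nu$ is a free graded $S$-module of rank $\dim_k (Z_i)_\nu$. In particular, $(\Z_0)_\nu = S\otimes_k R_\nu$ has $S$-rank $\dim_k R_{(2a-1,b-1)} = 2a\cdot b = 2ab$, which accounts for the row count in the statement.

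\textbf{Step 2 (Square-matrix criterion).} The matrix of $d_1\colon (\Z_1)_\nu \to (\Z_0)_\nu$ is of size $2ab\times \dim_k (Z_1)_\nu$, and from the short exact sequence
\[
0 \longrightarrow (Z_1)_\nu \longrightarrow R_\nu^{\,4} \longrightarrow (I_U)_{\nu+(a,b)} \longrightarrow 0
\]
one computes $\dim_k (Z_1)_\nu = 2ab + \dim_k (R/I_U)_{(3a-1,2b-1)}$. Thus the matrix is square of size $2ab\times 2ab$ precisely when $(R/I_U)_{(3a-1,2b-1)} = 0$. I would then identify this residual dimension with $\dim_k (H_2)_{(4a-1,3b-1)}$ via the bridge between Koszul homology and approximation-complex homology in \cite{Botbol11}: in both bidegrees $(3a-1,2b-1)$ and $(4a-1,3b-1)$ the Koszul strands $(K_3)$ and $(K_4)$ of $K(p_0,\ldots,p_3)$ vanish for dimension reasons, so that the standard short exact sequences $0 \to B_i \to Z_i \to H_i \to 0$ and $0 \to Z_i \to K_i \to B_{i-1} \to 0$ in these two bidegrees tie the residual $R/I_U$ term directly to the $H_2$ strand.

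\textbf{Step 3 (Degree of $\Delta_\nu$).} For a generically exact complex of graded free $S$-modules, the degree of the MacRae determinant equals the generic rank of the $0$-th term minus a sign-alternating correction from higher homologies. Generic acyclicity of $\Z_\nu$ over $S$ follows from the basepoint-free hypothesis (or the lci-basepoint condition): $H_0(\Z) \cong \S(I_U)$ agrees with the Rees algebra $\R(I_U)$ off a subscheme of $\mathrm{Proj}(S) = \P^3$ of codimension at least two, so $\Delta_\nu$ is well defined. Combining this with the identification from Step~2, and noting that no other $H_i(\Z)_\nu$ survives in the relevant bidegree, yields $\deg\Delta_\nu = 2ab - \dim_k (H_2)_{4a-1,3b-1}$.

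The main obstacle, in my view, is the bridge in Step~2 relating $\dim_k (R/I_U)_{(3a-1,2b-1)}$ to $\dim_k (H_2)_{(4a-1,3b-1)}$: this requires careful bookkeeping through the long exact sequences attached to the filtration of the Koszul complex and verifying that no other higher Koszul homology contributes in the bidegrees under consideration. Once this identification is in hand, both assertions of the lemma follow from the general determinantal-complex machinery.
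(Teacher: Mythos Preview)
The paper does not prove this lemma at all: it is quoted verbatim as \cite[7.3]{Botbol11} and used as a black box throughout, so there is no in-paper argument to compare your proposal against.

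That said, your sketch is a reasonable reconstruction of the kind of argument Botbol carries out. Step~1 and the first half of Step~2 are correct as stated: $\dim_k R_{2a-1,b-1}=2ab$, and the exact sequence for $(Z_1)_\nu$ gives $\dim_k (Z_1)_\nu = 2ab + \dim_k (R/I_U)_{3a-1,2b-1}$. The genuine content, as you yourself flag, is the identification $\dim_k (R/I_U)_{3a-1,2b-1} = \dim_k (H_2)_{4a-1,3b-1}$. Here you should be careful: the paper explicitly states that $(H_2)_{4a-1,3b-1}$ means the second homology of the \emph{approximation complex} strand $\Z_{4a-1,3b-1}$, not Koszul homology, and the shift from $(3a-1,2b-1)$ to $(4a-1,3b-1)$ comes from the internal twist by $(a,b)$ in passing between $Z_1$ and $Z_2$. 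Your Step~2 gestures at this via ``the bridge between Koszul homology and approximation-complex homology,'' but the actual bookkeeping---tracking which $(Z_i)_\mu$ and $(B_i)_\mu$ vanish for degree reasons and why no stray contribution survives---is precisely what Botbol does in \cite{Botbol11} and is not reproduced here. Likewise, Step~3 is correct in spirit but leans entirely on the determinantal-complex formalism without checking the acyclicity conditions in the lci-basepoint case. None of this is wrong, but it is a proof outline rather than a proof; since the paper itself only cites the result, that is perhaps all that is called for.
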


Here $(H_2)_{4a-1,3b-1}$ denotes the second homology module of the bigraded strand $\Z_{4a-1,3b-1}$. The notion of the determinant of a complex is a general formulation, but we refer the reader to \cite{Chardin06} for a concise description aimed towards applications similar to those presented here. We will be most interested in the case that $U$ is free of basepoints, in which case $(H_2)_{4a-1,3b-1}$ vanishes and the determinant of $\Z_\nu$ is simply the determinant of a square matrix representation of $d_1$.

\begin{lemma}[{\cite[7.4]{Botbol11}}]\label{Botbol degree lemma}
With the conditions of \Cref{Botbol det Z}, suppose that the basepoints of $U$ (if any) have multiplicity $e_x$. One has
$$\deg (\phi_U)\deg(F) = 2ab -\sum e_x$$
where $F\in S=k[T_0,T_1,T_2,T_3]$ is the implicit equation of $X_U$.
\end{lemma}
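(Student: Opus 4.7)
The plan is to resolve the indeterminacy of $\phi_U$ and apply intersection theory on the resulting blow-up. Let $B \subset \P^1 \times \P^1$ denote the scheme-theoretic base locus of $U$, which by assumption is either empty or a disjoint union of locally complete intersection fat points of colength $e_x$. Let $\pi: Y \to \P^1 \times \P^1$ be the blow-up of $B$, with exceptional Cartier divisor $E = \sum_x E_x$. Since $U$ lifts to a basepoint-free linear system on $Y$, we obtain a regular morphism $\tilde{\phi}: Y \to \P^3$ whose image is $X_U$ and whose generic degree over $X_U$ equals $\deg(\phi_U)$.

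First, I would compute the self-intersection of $\tilde{\phi}^* \O_{\P^3}(1)$ on $Y$. By the defining property of the blow-up, $\tilde{\phi}^* \O_{\P^3}(1) = \pi^* \O_{\P^1 \times \P^1}(a,b) - E$. Writing $H_1, H_2$ for the pullbacks of hyperplane classes from the two factors, so that $H_1^2 = H_2^2 = 0$ and $H_1 \cdot H_2 = 1$, expansion gives
\[
(\tilde{\phi}^* \O(1))^2 = \pi^*(aH_1+bH_2)^2 - 2\pi^*(aH_1+bH_2) \cdot E + E^2.
\]
The projection formula for $\pi$ kills the cross term since $\pi_* E = 0$, the first term evaluates to $2ab$, and the third term equals $-\sum_x e_x$ by local intersection theory on the blow-up of a codimension-$2$ local complete intersection. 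Hence $(\tilde{\phi}^* \O(1))^2 = 2ab - \sum_x e_x$.

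Second, I would relate this self-intersection to $\deg(\phi_U) \deg(F)$ via the projection formula for $\tilde{\phi}$. Since $X_U \subset \P^3$ is the hypersurface cut out by $F$, the self-intersection of a hyperplane class restricted to $X_U$ equals $\deg(F)$, and so
\[
(\tilde{\phi}^* \O(1))^2 = \deg(\tilde{\phi}) \cdot (\O_{\P^3}(1)|_{X_U})^2 = \deg(\phi_U) \cdot \deg(F).
\]
Combining with the previous computation yields the desired identity.

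The main obstacle, as I see it, is verifying the local intersection identity $E_x^2 = -e_x$ under the definition of $e_x$ used here. For a reduced basepoint this is the classical $(-1)$-curve identity, but for a fat basepoint one must match the scheme-theoretic blow-up to the Hilbert-Samuel (colength) multiplicity. The local complete intersection hypothesis ensures that $\pi^{-1}(x)$ is a $\P^1$-bundle over $\spec(R/I_U)_x$ with normal bundle $\O(-1)$, and a short local calculation with the Koszul resolution of the base ideal yields $E_x \cdot E_x = -e_x$; summing over basepoints gives the global formula used above.
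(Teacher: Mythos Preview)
The paper does not give its own proof of this lemma: it is quoted verbatim from \cite[7.4]{Botbol11} and used as a black box, so there is no argument in the paper to compare yours against.

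That said, your sketch is the standard route to this degree formula and is essentially correct. Resolving indeterminacy by blowing up the base scheme, identifying $\tilde\phi^*\O_{\P^3}(1)$ with $\pi^*\O(a,b)-E$, and combining the projection formulas for $\pi$ and $\tilde\phi$ is exactly how such identities are proved (and is, in outline, how Botbol obtains the toric version). The one step that deserves a bit more care is $E_x^2=-e_x$: your description of $\pi^{-1}(x)$ as ``a $\P^1$-bundle over $\spec(R/I_U)_x$ with normal bundle $\O(-1)$'' is slightly informal, but the underlying computation is right. Since $I_x$ is a complete intersection, the associated graded $\bigoplus I_x^n/I_x^{n+1}$ is the polynomial ring $(R/I_x)[T_0,T_1]$, so $E_x\cong\P^1_{R/I_x}$ with $\O_Y(E_x)|_{E_x}\cong\O_{E_x}(-1)$; then $E_x^2=\deg_{E_x}\O(-1)=-\ell(R/I_x)=-e_x$, which is the identity you need.
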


Combining \Cref{Botbol det Z,Botbol degree lemma}, we obtain our primary tool to determine the implicit equation $F$ of $X_U$.

\begin{thm}[{\cite[7.5]{Botbol11}}]\label{Botbol implicit eqn}
    With the assumptions of \Cref{Botbol det Z}, we have that $\Delta_\nu = F^{\deg \phi_U}$. In particular, from  \Cref{Botbol degree lemma} we have
    $$\deg \Delta_\nu =\deg (F^{\deg \phi_U}) = (\deg F)(\deg \phi_U) =  2ab -\sum e_x.$$
    Hence by \Cref{Botbol det Z}, it follows that $\dim (H_2)_{4a-1,3b-1} =\sum e_x$.
\end{thm}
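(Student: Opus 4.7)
My plan is to treat the identity $\Delta_\nu = F^{\deg \phi_U}$ as the heart of the theorem; once this is established, the degree computation is a direct substitution from \Cref{Botbol degree lemma}, and the final consequence about $(H_2)_{4a-1,3b-1}$ falls out by comparing with the degree formula in \Cref{Botbol det Z}. So I focus the rest of the sketch on the identity.

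The strategy is to interpret $\Delta_\nu$ via MacRae's theorem on determinants of generically exact complexes. Concretely, $(\Z_\nu, d\updot)$ is a complex of finitely generated free $S$-modules whose zeroth homology is the bigraded strand $\bigl(\S(I_U)\bigr)_\nu$ of the symmetric algebra of $I_U$, viewed as a finitely generated $S$-module. First I would verify that $\Z_\nu$ is generically exact over $S$: on the complement of $X_U$ in $\Spec S$ the module $\bigl(\S(I_U)\bigr)_\nu$ vanishes (its annihilator contains the image of the $T_i$ under the natural $\phi_U$-pullback), and under the basepoint-free / local complete intersection hypothesis the higher approximation homology is torsion supported on the exceptional locus; standard acyclicity lemmas then force $(\Z_\nu)$ to be a generically free resolution of its $H_0$. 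MacRae's theorem (in the form used in \cite{Chardin06,Botbol11}) then gives $\Delta_\nu = \fitt_0\bigl((\S(I_U))_\nu\bigr)$ up to a unit in $S$.

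Next, I would compute this Fitting ideal geometrically. The Rees algebra $\R(I_U) \subseteq R\otimes_k S$ defines the graph $\Gamma \subseteq \P^1\times \P^1 \times \P^3$ of $\phi_U$, and $\Proj$ of the projection $\S(I_U)\twoheadrightarrow \R(I_U)$ differs from $\Gamma$ only on a locus mapping to points of $\P^3$ outside $X_U$ or into the basepoint locus on $\P^1\times \P^1$. Pushing forward to $\P^3 = \Proj S$, the sheaf $\widetilde{(\S(I_U))_\nu}$ is supported on $X_U$, and its stalk at a generic point of $X_U$ has length equal to the cardinality of the generic fiber of $\phi_U$, namely $\deg \phi_U$. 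Thus $\fitt_0\bigl((\S(I_U))_\nu\bigr)$ cuts out $X_U$ with multiplicity $\deg \phi_U$, giving $(F^{\deg \phi_U})$; comparing leading coefficients and using that both $\Delta_\nu$ and $F^{\deg \phi_U}$ are polynomials in $S$ (not just ideals) pins down the equality $\Delta_\nu = F^{\deg \phi_U}$ up to a nonzero scalar, which is the claim.

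The main obstacle I anticipate is the generic rank computation in the previous paragraph: one must be careful that the twist $\nu=(2a-1,b-1)$ lies beyond the bigraded Castelnuovo--Mumford regularity of $R/I_U$ so that $\bigl(\S(I_U)\bigr)_\nu$ genuinely represents the pushforward at the sheaf level, and one must verify that no spurious components arise from the difference between $\S(I_U)$ and $\R(I_U)$. The basepoint-free (or local complete intersection basepoint) hypothesis is what controls both issues, via vanishing of $(H_2)_{4a-1,3b-1}$ in \Cref{Botbol det Z}; once this is in hand, the degree matching $\deg \Delta_\nu = 2ab - \dim (H_2)_{4a-1,3b-1} = 2ab - \sum e_x = (\deg F)(\deg \phi_U)$ is automatic, and the last claim $\dim (H_2)_{4a-1,3b-1} = \sum e_x$ follows by rearrangement.
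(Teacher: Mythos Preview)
The paper does not actually prove this theorem: it is quoted verbatim from \cite[7.5]{Botbol11}, and the only argument present in the paper is the two-line derivation already written into the theorem statement (apply \Cref{Botbol degree lemma} to get the degree formula, then compare with \Cref{Botbol det Z} to read off $\dim (H_2)_{4a-1,3b-1}=\sum e_x$). So there is nothing to compare your proposal against beyond that trivial bookkeeping, which you handle correctly in your final paragraph.

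Your sketch instead attempts to \emph{reprove} the cited identity $\Delta_\nu = F^{\deg \phi_U}$ from first principles, essentially reconstructing the argument of \cite{Botbol11} (and the earlier \cite{BC05,BJ03,Chardin06}). The outline via MacRae's invariant and the Fitting ideal of $(\S(I_U))_\nu$ is the right one and matches the literature. Two places where your sketch is a bit loose: first, generic exactness of $\Z_\nu$ is not just that $H_0$ vanishes off $X_U$ but that the \emph{higher} homology is torsion as well, and this is exactly where the almost-local-complete-intersection hypothesis on basepoints enters (acyclicity of $\Z$ off the basepoint locus); you gesture at this but would need to invoke the acyclicity criterion for approximation complexes explicitly. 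Second, the passage from $\fitt_0\bigl((\S(I_U))_\nu\bigr)$ to $(F^{\deg\phi_U})$ uses that the choice of $\nu$ lies outside a suitable regularity region so that the symmetric-algebra strand and the Rees-algebra strand agree in the relevant degree; in \cite{Botbol11} this is handled via a multigraded regularity bound, not just a generic-rank count. None of this is wrong in your sketch, but if you were writing this up rather than citing it, those two points are where the real work lies.
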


Thus in the absence of basepoints, the differential $d_1:\, (\Z_1)_\nu \rightarrow (\Z_0)_\nu$ is a square $2ab\times 2ab$ matrix. Additionally, its determinant is a power of the implicit equation $F$ of $X_U$, and this power is precisely $\deg \phi_U$.


\section{Quadratic Syzygies}\label{Quad Syz Section}

We now introduce the setting for the duration of the paper. Whereas many of the conventions have been stated in the introduction, we briefly restate them here for clarity. Our primary setting is the following.

\begin{set}\label{General Setting}
Let $R=k[s,t,u,v]$ with $\bideg s,t =(1,0)$ and $\bideg u,v = (0,1)$. Let $U \subseteq R_{a,b}$ be a subspace with basis $\{p_0,p_1,p_2,p_3\}$ and let $I_U = (p_0,p_1,p_2,p_3) \subseteq R$. Assume that $U$ is basepoint free and write $\phi_U:\, \P^1\times \P^1 \longrightarrow \P^3$ to denote the regular map defined by $U$, with image $X_U$. Assume that $b\geq 3$ and that $I_U$ has a first syzygy $Q$ of bidegree $(0,2)$. Moreover, assume that $I_U$ has no linear first syzygy.
\end{set}

We may safely assume that $I_U$ has no linear syzygy in bidegree $(0,1)$ or $(1,0)$, as this case has already been examined in \cite{DS16}. Moreover, we note that by symmetry, the case that $I_U$ has a syzygy in bidegree $(2,0)$ is also addressed if $a\geq 3$.

To begin our initial treatment in the setting above, we apply a technique from the proof of \cite[4.8]{SSV14}, and introduce similar constructions. As $I_U$ has a first syzygy of bidegree $(0,2)$, there exist coefficients $a_i,b_i,c_i \in k$ such that  \begin{equation}\label{syz eqn}
\sum_{i=0}^3(a_iu^2+b_iuv+c_iv^2) p_i=0.
\end{equation}
Rearranging, we have
\begin{equation}\label{syz sum}
0=\sum_{i=0}^3(a_iu^2+b_iuv+c_iv^2) p_i= (\sum_{i=0}^3a_i p_i)u^2 + (\sum_{i=0}^3 b_i p_i)uv + ( \sum_{i=0}^3 c_i p_i) v^2.   
\end{equation}
Writing $f_0 = \sum_{i=0}^3a_i p_i$, $f_1=\sum_{i=0}^3b_i p_i$, and $f_2 = \sum_{i=0}^3c_i p_i$, we note that $[f_0, f_1, f_2]$ is a syzygy on $[u^2, uv, v^2]$. As a free resolution of the ideal $(u^2, uv,v^2)$ is well known, by say the Hilbert-Burch theorem \cite[20.15]{Eisenbud}, it follows that 
\begin{equation}\label{alpha-beta columns}
\begin{bmatrix}
    f_0\\
    f_1\\
    f_2
\end{bmatrix} = \alpha \begin{bmatrix}
    v\\
    -u\\
    0
\end{bmatrix} + \beta \begin{bmatrix}
    0\\
    v\\
    -u
\end{bmatrix}
=\begin{bmatrix}
    \alpha v\\
    \beta v - \alpha u\\
    -\beta u
\end{bmatrix}
\end{equation}
for some $\alpha, \beta \in R_{a,b-1}$.

\begin{rem}\label{f0 and f2 nonzero}
Notice that, as $I_U$ has no linear syzygy, both $f_0 \neq 0$ and $f_2\neq 0$. Indeed, as $\{p_0,p_1,p_2,p_3\}$ is linearly independent, if $f_0=0$ we see that $a_0=a_1=a_2=a_3=0$. Hence from (\ref{syz eqn}) we have
$$0=\sum_{i=0}^3(b_iuv+c_iv^2) p_i = v\sum_{i=0}^3(b_iu+c_iv) p_i,$$
and so $\sum_{i=0}^3(b_iu+c_iv) p_i =0$, which contradicts the assumption that $I_U$ has no linear syzygy. A similar argument shows that $f_2 \neq 0$. In particular, following (\ref{alpha-beta columns}), we see that both $\alpha$ and $\beta$ are nonzero.
\end{rem}

We note that, whereas both $f_0$ and $f_2$ are nonzero, there is no such restriction on the vanishing of $f_1$. In the case that $f_1=0$, (\ref{syz sum}) and \Cref{Dim 2 generation} will show that $I_U$ has a \textit{reduced} Koszul syzygy in bidegree $(0,2)$, which may be taken as $Q=[v^2,-u^2,0,0]$; see \Cref{Intro example dim V=2}.

With this, we will eventually consider two cases: when the set $\{f_0,f_1,f_2\}$ is $k$-linearly independent and when it is not. To this end, let $V = \Span_k \{f_0,f_1,f_2\}$ denote the subspace of $U$ spanned by $\{f_0,f_1,f_2\}$.

\begin{prop}\label{Dimension of V}
With $V$ the subspace spanned by $\{f_0,f_1,f_2\}$, we have that $2\leq \dim V\leq 3$.
\end{prop}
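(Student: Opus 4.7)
The upper bound $\dim V \leq 3$ is immediate since $V$ is spanned by three elements. The entire content of the statement is the lower bound $\dim V \geq 2$, so I focus there.

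The plan is to assume for contradiction that $\dim V \leq 1$ and derive a contradiction with \Cref{f0 and f2 nonzero}. Since that remark guarantees $f_0 \neq 0$, the only case to rule out is $\dim V = 1$, in which case $f_0, f_1, f_2$ are all scalar multiples of a single nonzero element $f \in U$. Writing $f_i = \lambda_i f$ with $\lambda_0, \lambda_2 \in k$ nonzero (again by \Cref{f0 and f2 nonzero}), I would substitute into the identities
\begin{equation*}
\alpha v = \lambda_0 f, \qquad \beta v - \alpha u = \lambda_1 f, \qquad -\beta u = \lambda_2 f
\end{equation*}
coming from (\ref{alpha-beta columns}).

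The first equation forces $v \mid f$ in the UFD $R$, and the third forces $u \mid f$, so $uv \mid f$. Writing $f = uv g$ for some nonzero $g \in R_{a, b-2}$ (this is where the assumption $b \geq 3$ is implicitly used, though even $b \geq 2$ suffices), the first and third equations yield $\alpha = \lambda_0 u g$ and $\beta = -\lambda_2 v g$. Substituting these into the middle equation and cancelling gives the identity
\begin{equation*}
g\bigl(\lambda_0 u^2 + \lambda_1 uv + \lambda_2 v^2\bigr) = 0
\end{equation*}
in $R$. Since $R$ is a domain and $g \neq 0$, the quadratic in $u, v$ must vanish, forcing $\lambda_0 = \lambda_1 = \lambda_2 = 0$ by linear independence of $\{u^2, uv, v^2\}$. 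This contradicts $\lambda_0 \neq 0$, so $\dim V \geq 2$.

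I expect no significant obstacle: the argument is essentially a divisibility computation in the polynomial ring, leveraging the explicit form of $f_0, f_1, f_2$ given by the Hilbert--Burch resolution of $(u^2, uv, v^2)$ in (\ref{alpha-beta columns}), together with the nonvanishing of $f_0$ and $f_2$ already established in \Cref{f0 and f2 nonzero}. The only subtlety worth flagging in the writeup is that the contradiction produced by $\dim V = 1$ does not require invoking basepoint-freeness directly (that hypothesis is used upstream to justify the nonvanishing of $f_0$ and $f_2$ via the absence of a linear syzygy).
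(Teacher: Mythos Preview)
Your argument is correct, and it lands on the same contradiction as the paper: a nonzero element of $R$ times the quadratic $\lambda_0 u^2 + \lambda_1 uv + \lambda_2 v^2$ vanishes, forcing all $\lambda_i = 0$. The difference is that you take a detour through the Hilbert--Burch decomposition (\ref{alpha-beta columns}), extracting $\alpha$ and $\beta$, running divisibility arguments to write $f = uvg$, and then substituting back. The paper bypasses all of this: since $f_0 \neq 0$, it writes $f_1 = d_1 f_0$ and $f_2 = d_2 f_0$, plugs directly into the relation $f_0 u^2 + f_1 uv + f_2 v^2 = 0$ from (\ref{syz sum}), and obtains $f_0(u^2 + d_1 uv + d_2 v^2) = 0$ in one line. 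Your route works, but the $\alpha,\beta$ machinery is not needed here---(\ref{syz sum}) already packages the syzygy in the form that yields the contradiction immediately.
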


\begin{proof}
The last inequality is clear, hence we need only verify the first. By \Cref{f0 and f2 nonzero}, we see that $V\neq 0$ and so it suffices to show that $\dim V\neq 1$. Recall from \Cref{f0 and f2 nonzero} that $f_0\neq 0$, hence if $\dim V=1$ then $f_1=d_1 f_0$ and $f_2=d_2 f_0$ for some $d_1,d_2\in k$. Thus (\ref{syz sum}) shows that $(u^2 + d_1uv +d_2v^2)f_0 =0$, however this is a contradiction as $R$ is a domain.
\end{proof}

\begin{rem}\label{lin syz case}
We note that the subspace $V$ is inspired by a similar construction in \cite[4.8]{SSV14}, within the study of tensor product surfaces of bidegree $(2,1)$. However, the formulation of such a subspace associated to a syzygy holds quite generally, a topic which we explore in \Cref{Questions section}. In particular, adapting this method to the setting of a \textit{linear} syzygy in \cite{DS16}, the proof of \cite[2.1]{DS16} shows the resulting subspace $V$ has $\dim V=2$ in this setting. As a consequence, the statement of \Cref{Dim 2 main result thm} reads very similarly to \cite[2.2]{DS16}.
\end{rem}

With \Cref{Dimension of V}, we may consider two cases, namely when $\dim V= 2$ and when $\dim V=3$. We note that this figure dictates the largest size of a subset of $\{f_0,f_1,f_2\}$ that may be taken as part of a minimal generating set of $I_U$. With this, we proceed as in \cite{DS16} and establish particular generating sets of $I_U$, in each case. Additional syzygies, based upon this generation, are then determined in the proceeding sections.

\begin{rem}\label{Generation - dim V remark}
We note that the dimension of $V$ is easily computed from the data of (\ref{syz sum}). Indeed, write $\varphi$ for the $4\times 3$ coefficient matrix
\begin{equation}\label{coefficient matrix}
    \varphi = \begin{bmatrix}
        a_0&b_0 &c_0 \\
        a_1&b_1&c_1\\
        a_2& b_2& c_2\\
        a_3&b_3&c_3
    \end{bmatrix}
\end{equation}
and note that $[f_0,f_1,f_2] = [p_0,p_1,p_2,p_3]\cdot \varphi$. As $\{p_0,p_1,p_2,p_3\}$ is a basis of $U$, and hence linearly independent, it follows that $\dim V = \rk \varphi$, the latter of which is a linear algebra computation.

Additionally, we note that $\dim V$ is precisely the largest size of a subset of $\{f_0,f_1,f_2\}$ that may be taken as part of a minimal generating set of $I_U$. Clearly one has $(f_0,f_1,f_2)\subseteq I_U = (p_0,p_1,p_2,p_3)$ and, as every polynomial involved has the same bidegree, the size of this subset is exactly the largest-sized nonzero, and hence invertible, minor of $\varphi$. 
\end{rem}

\subsection{Case 1: \boldmath{$\dim V=2$}}

With the conditions of \Cref{General Setting}, we consider the first case of \Cref{Dimension of V} and assume that $\dim V=2$. With this, we aim to produce a particular generating set of $I_U$, which will be used to determine additional syzygies of $I_U$ in \Cref{dim 2 section}. 

With the assumption that $\dim V=2$, clearly the set $\{f_0,f_1,f_2\}$ is linearly dependent. Hence there is an equation of dependence
\begin{equation}\label{lin dep eqn}
    d_0 f_0+d_1f_1+d_2f_2 =0
\end{equation}
for $d_0,d_1,d_2\in k$, with at least one nonzero. With this, we consider the (non-exclusive) subcases depending on the non-vanishing of the coefficients.

\begin{itemize}
\item[(i)] Suppose that $d_2\neq 0$. After rescaling $d_2$ to $1$, rearranging, and relabelling in (\ref{lin dep eqn}), we may write 
\begin{equation}\label{f2 equation}
f_2= d_0f_0+d_1f_1
\end{equation}
for $d_0,d_1\in k$. With this, we see that $\{f_0,f_1\}$ is then a basis for $V$. We also note that, as $f_2\neq 0$ by \Cref{f0 and f2 nonzero}, it follows that at least one of $d_0,d_1$ is nonzero. 

Recall from (\ref{syz sum}) that $[f_0,f_1,f_2]$ is a syzygy on the ideal $(u^2,uv,v^2)$. With this and (\ref{f2 equation}) we have
$$f_0u^2+f_1uv+(d_0f_0+d_1f_1)v^2=0,$$
hence
\begin{equation}\label{f0 f1 Koszul relation}
f_0(u^2+d_0v^2) + f_1(uv+d_1v^2)=0.    
\end{equation}
With this, we observe that $u^2+d_0v^2$, $uv+d_1v^2$ is an $R$-regular sequence. Indeed, clearly the two are not unit multiples of each other, and so $\hgt (u^2+d_0v^2, uv+d_1v^2) =1$ only if these polynomials have a common linear factor in $k[u,v]$. However, factoring it from (\ref{f0 f1 Koszul relation}), it would then follow that $I_U$ has a linear syzygy, which contradicts the assumptions of \Cref{General Setting}.

From (\ref{f0 f1 Koszul relation}) it follows that $[f_0, f_1]$ is a syzygy on the ideal $(u^2+d_0v^2, uv+d_1v^2)$. Since this ideal is generated by a regular sequence, its syzygy module is spanned by a single Koszul syzygy. Hence
\[
 \left\{
    \begin{array}{ll}
      f_0  =& h(uv+d_1v^2) \\
      f_1 =&  -h (u^2+d_0v^2)
    \end{array}
  \right.
  \]
  for some $h \in R_{a,b-2}$. Lastly, we note that since $u^2+d_0v^2$ and $uv+d_1v^2$ share no common factor, and $uv+d_1v^2 = v(u+d_1v)$, we have that
  $u+d_1v \nmid u^2+d_0v^2$. With this, we see that $d_1^2+d_0 \neq 0$, recalling that $d_0$ and $d_1$ cannot simultaneously vanish by (\ref{f2 equation}) and \Cref{f0 and f2 nonzero}.

\item[(ii)] Alternatively, suppose that $d_1\neq 0$. After rescaling $d_1$ to $1$, rearranging, and relabelling in (\ref{lin dep eqn}) accordingly, we may write  
\begin{equation}\label{f1 equation}
f_1= d_0f_0+d_2f_2
\end{equation}
for $d_0,d_2\in k$. With this, we note that $\{f_0,f_2\}$ is then a basis for $V$.

With this and (\ref{syz sum}), it follows that
  $$f_0u^2 + (d_0f_0+d_2f_2)uv + f_2v^2 =0,$$
  hence
  \begin{equation}\label{f0 f2 Koszul syzygy}
  f_0(u^2+d_0 uv) + f_2(v^2+d_2 uv)=0.
  \end{equation}
A similar argument as before shows that $u^2+d_0 uv, v^2+d_2 uv$ is a regular sequence. Noting from (\ref{f0 f2 Koszul syzygy}) that $[f_0, f_2]$ is a syzygy on this sequence, it follows that 
\[
 \left\{
    \begin{array}{ll}
      f_0  =& h(v^2+d_2 uv) \\
      f_2 =&  -h (u^2+d_0 uv)
    \end{array}
  \right.
  \]
  for some $h \in R_{a,b-2}$. Moreover, since $u^2+d_0 uv, v^2+d_2 uv$ form a regular sequence, they have no common factor. As $v^2+d_2 uv = v(v+d_2u)$ and $u^2+d_0 uv = u(u+d_0v)$, we see that $v+d_2u$ and $u+d_0v$ are not unit multiples of each other, hence $d_0d_2-1\neq 0$.
\end{itemize}

\begin{rem}\label{Other Case}
  We purposely omit the third case that $d_0\neq 0$ in (\ref{lin dep eqn}), as it is superfluous. Indeed, if $d_0\neq 0$, then at least one of $d_1,d_2$ is nonzero as well, as $f_0\neq 0$ by \Cref{f0 and f2 nonzero}. Hence this setting belongs to at least one of the cases above. We also note that instead, one could consider the two cases that $d_1\neq 0$ and $d_2\neq 0$. However, repeating as before, this latter case follows identically to the first case above by symmetry in the monomial sequence $\{u^2,uv,v^2\}$.
\end{rem}

We now address the generation of $I_U$ in the case $\dim V=2$. In order to make notation consistent, we adjust the indices of the coefficients in the discussion above.

\begin{prop}\label{Dim 2 generation}
With the assumptions of \Cref{General Setting}, if $\dim V=2$, then after possibly reindexing, we have $I_U=(hg_0,hg_1,p_2,p_3)$ for some  $h \in R_{a,b-2}$ and $g_0,g_1\in R_{0,2}$ where either
\[
\begin{array}{ccc}
\left\{
    \begin{array}{ll}
      g_0  =& uv+d_0v^2 \\
      g_1 =&  u^2+d_1v^2
    \end{array}
  \right.
  &\text{\qquad or \qquad }
  &\left\{
    \begin{array}{ll}
      g_0  =& v^2+d_0 uv \\
      g_1 =&  u^2+d_1 uv
    \end{array}
  \right.\\[3ex]
  \text{with $d_0^2+d_1 \neq 0$} & & \text{with $d_0d_1-1 \neq 0$}
\end{array}
  \]
  for some $d_0,d_1\in k$.
\end{prop}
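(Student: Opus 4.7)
The statement essentially packages the case analysis carried out in the discussion immediately preceding it, together with a basis-extension argument. My plan is to split the proof into two parts: first, identify a preferred basis of $V$ that has already been written in the desired factored form; second, extend it to a basis of $U$ to obtain generators of $I_U$ in the claimed shape.

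First I would invoke (\ref{lin dep eqn}) to obtain the dependence relation $d_0f_0 + d_1f_1 + d_2f_2 = 0$. By \Cref{Other Case}, it is enough to handle the subcase $d_2 \neq 0$ and the subcase $d_1 \neq 0$. In the first subcase, the preceding discussion shows that $\{f_0, f_1\}$ is a basis of $V$ and produces the factorization
\[
f_0 = h(uv + d_1 v^2), \qquad f_1 = -h(u^2 + d_0 v^2), \qquad d_1^2 + d_0 \neq 0,
\]
for some $h \in R_{a, b-2}$ and $d_0, d_1 \in k$. In the second subcase, the analogous analysis produces the basis $\{f_0, f_2\}$ of $V$ together with
\[
f_0 = h(v^2 + d_2 uv), \qquad f_2 = -h(u^2 + d_0 uv), \qquad d_0 d_2 - 1 \neq 0.
\]
These two outputs are the two alternatives listed in the proposition, once the signs are absorbed into the generators of the ideal and the indices on the scalars are renamed to $d_0, d_1$.

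Next I would extend the chosen basis of $V$ to a basis of $U$. Since $V \subseteq U$ and $\dim V = 2$, in the first subcase there exist $p_2', p_3' \in U$ with $\{f_0, f_1, p_2', p_3'\}$ a basis of $U$; renaming $p_2', p_3'$ as $p_2, p_3$ (this is the reindexing mentioned in the statement) yields
\[
I_U = (p_0, p_1, p_2, p_3) = (f_0, f_1, p_2, p_3) = \bigl(h(uv + d_1v^2),\, h(u^2 + d_0v^2),\, p_2, p_3\bigr),
\]
where the sign on $f_1$ has been absorbed. Relabeling the scalars $d_0 \leftrightarrow d_1$ then matches the first of the two asserted forms, including the inequality $d_0^2 + d_1 \neq 0$. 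The second subcase is handled identically, producing $I_U = (hg_0, hg_1, p_2, p_3)$ in the second of the two forms.

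I do not expect any serious obstacle here: the heavy lifting—establishing that $u^2 + d_0 v^2,\, uv + d_1 v^2$ (respectively $u^2 + d_0 uv,\, v^2 + d_2 uv$) is a regular sequence and then applying Hilbert--Burch to extract $h$—has already been completed in cases (i) and (ii) of the preceding discussion. The only mild care points are (a) verifying that one really may absorb the sign in $f_1 = -h(u^2 + d_0 v^2)$ without affecting the ideal, and (b) tracking the non-degeneracy inequalities $d_1^2 + d_0 \neq 0$ and $d_0 d_2 - 1 \neq 0$ through the reindexing so that they appear in the form $d_0^2 + d_1 \neq 0$ and $d_0 d_1 - 1 \neq 0$ stated in the proposition.
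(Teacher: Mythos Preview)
Your proposal is correct and follows essentially the same route as the paper: the paper's proof is a one-liner citing \Cref{Generation - dim V remark} (which supplies the basis-extension step you spell out) and the preceding case analysis (i)--(ii), after reindexing the scalars. Your write-up simply unpacks these references explicitly, including the sign absorption and the swap $d_0 \leftrightarrow d_1$ needed to match the stated forms.
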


\begin{proof}
This follows from \Cref{Generation - dim V remark} and the previous discussion, after adjusting the indices involved.
\end{proof}

\begin{rem}\label{Find d0 and d1}
 Although the case that $\dim V=2$ splits into two subcases, one may easily determine the coefficients $d_0$ and $d_1$, and which case of \Cref{Dim 2 generation} one has. Indeed, recall that $\{p_0,p_1,p_2,p_3\}$ is linearly independent, and one has the matrix equation in \Cref{Generation - dim V remark}. Thus finding the coefficients in (\ref{lin dep eqn}) corresponds to finding a basis for the kernel of the coefficient matrix $\varphi$, which is an exercise in linear algebra.
\end{rem}

With the generating set of $I_U$ established in \Cref{Dim 2 generation}, it will be much easier to describe other syzygies of $I_U$. In particular, in \Cref{dim 2 section} we will produce a set of additional syzygies which will inevitably determine the complex of \Cref{Botbol det Z}. First however, we consider the generation of $I_U$ when $\dim V=3$.

\subsection{Case 2: \boldmath{$\dim V=3$}}

With the conditions of \Cref{General Setting}, we now consider the second case of \Cref{Dimension of V} and proceed under the assumption that $\dim V=3$, i.e. $\{f_0,f_1,f_2\}$ is a basis of $V$. As $\{f_0,f_1,f_2\}$ is linearly independent, these polynomials may be taken as minimal generators of $I_U$, following \Cref{Generation - dim V remark}.

\begin{prop}\label{Dim 3 generation}
With the assumptions of \Cref{General Setting}, if $\dim V =3$, then after possibly reindexing, we have $I_U=(\alpha u,\beta v - \alpha u, -\beta u,p_3)$ for some $\alpha, \beta \in R_{a,b-1}$.
\end{prop}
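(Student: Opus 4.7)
The plan is to derive this as a direct consequence of the analysis already carried out, using \Cref{Generation - dim V remark}. By assumption $\dim V = 3$, so the coefficient matrix $\varphi$ from \eqref{coefficient matrix} has rank $3$, and hence some $3 \times 3$ submatrix of $\varphi$ is invertible. After reindexing the basis $\{p_0,p_1,p_2,p_3\}$ of $U$ (which is purely cosmetic, as $I_U$ is independent of the choice of basis), I may assume this invertible submatrix is formed by the first three rows.

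Call this submatrix $\varphi'$ and let $w$ denote the last row of $\varphi$. The relation $[f_0,f_1,f_2] = [p_0,p_1,p_2,p_3] \cdot \varphi$ then rewrites as
\[
[f_0,f_1,f_2] - p_3 \cdot w = [p_0,p_1,p_2] \cdot \varphi',
\]
and since $\varphi'$ is invertible over $k$, this lets me solve for $p_0,p_1,p_2$ as $k$-linear combinations of $f_0,f_1,f_2,p_3$. Thus $\{f_0,f_1,f_2,p_3\}$ spans $U$ over $k$, and a dimension count shows it is a basis. In particular $I_U = (f_0,f_1,f_2,p_3)$, and substituting the formulas from \eqref{alpha-beta columns} immediately gives the claimed generating set.

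I expect no real obstacle here: the argument is essentially linear algebra built on top of the Hilbert--Burch description recorded in \eqref{alpha-beta columns}. The only point requiring mild care is the reindexing, which is harmless because the statement is invariant under permutation of the basis elements of $U$ (and permuting the $p_i$ merely permutes the rows of $\varphi$ correspondingly while leaving $f_0, f_1, f_2$ unchanged). Existence of $\alpha, \beta \in R_{a,b-1}$ is not something to be checked in this proof, as they were already produced from Hilbert--Burch in \eqref{alpha-beta columns}, and the nonvanishing observations from \Cref{f0 and f2 nonzero} are not even needed for this particular statement.
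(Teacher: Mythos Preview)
Your proposal is correct and takes essentially the same approach as the paper, which proves the proposition in a single sentence by citing \eqref{alpha-beta columns} and \Cref{Generation - dim V remark}. You have simply spelled out the linear algebra that the remark summarizes.
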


\begin{proof}
This follows from (\ref{alpha-beta columns}) and \Cref{Generation - dim V remark}.
\end{proof}

With suitable generating sets of $I_U$ established under the assumptions of \Cref{General Setting}, within both cases of \Cref{Dimension of V}, we may proceed in the following sections with a deeper study of the syzygies of $I_U$. In particular, we determine a subset of syzygies that is sufficient to determine the bigraded strand $\Z_{2a-1,b-1}$ in \Cref{Botbol det Z}, and hence determine the implicit equation of $X_U$.


\section{Syzygies in the case $\dim V=2$}\label{dim 2 section}

With the conditions of \Cref{General Setting}, we proceed under the assumption that $\dim V=2$ for the duration of this section. Recall from \Cref{Dim 2 generation} that $I_U$ may be generated as $I_U=(hg_0,hg_1,p_2,p_3)$, for particular $g_0,g_1 \in R_{0,2}$, and some $h\in R_{a,b-2}$. By selecting this generating set, we note that the syzygy $Q$ of bidegree $(0,2)$ in \Cref{General Setting} is the reduced Koszul syzygy
\begin{equation}\label{quad syz dim 2}
 Q=\begin{bmatrix}
    g_1\\-g_0\\0\\0
\end{bmatrix}.   
\end{equation}

With this specific choice of generating set for $I_U$, we aim to produce a set of additional syzygies which, with $Q$, will determine the complex $\Z_{2a-1,b-1}$  in \Cref{Botbol det Z}. Before we are able to describe these syzygies, we must verify the containment of ideals $I_U \subseteq (g_0,g_1)$, which follows from the proceeding lemma.

\begin{lemma}\label{Ideal containments}
 We have the containment of ideals $(u,v)^3 \subseteq (g_0,g_1)$. 
\end{lemma}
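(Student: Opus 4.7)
The plan is to work entirely within the subring $k[u,v] \subseteq R$, since both $g_0$ and $g_1$ involve only the variables $u$ and $v$. Once the containment $(u,v)^3 \subseteq (g_0, g_1)$ is established in $k[u,v]$, it transfers to $R$ immediately, because $(u,v)^3 R$ is generated by the four monomials $u^3, u^2 v, u v^2, v^3$, all of which lie in $k[u,v]$.

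The first step is to verify that $g_0$ and $g_1$ share no common factor in $k[u,v]$, which I would do by case analysis following \Cref{Dim 2 generation}. In the first case, $g_0 = v(u+d_0 v)$ and $g_1 = u^2 + d_1 v^2$; since the coefficient of $u^2$ in $g_1$ is nonzero, $v \nmid g_1$, and substituting $u = -d_0 v$ into $g_1$ yields $(d_0^2 + d_1)v^2$, which is nonzero by hypothesis, so $u + d_0 v \nmid g_1$. In the second case, $g_0 = v(v+d_0 u)$ and $g_1 = u(u+d_1 v)$; again $v \nmid g_1$, and substituting $v = -d_0 u$ into $g_1$ yields $(1 - d_0 d_1)u^2$, nonzero by hypothesis, so $v + d_0 u \nmid g_1$. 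In either case, no irreducible factor of $g_0$ divides $g_1$.

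With coprimality in hand, $g_0, g_1$ form a regular sequence in the two-dimensional Cohen--Macaulay ring $k[u,v]$, so the Koszul complex on $\{g_0,g_1\}$ gives a free resolution of $k[u,v]/(g_0,g_1)$. The resulting Hilbert series is
$$\frac{(1-t^2)^2}{(1-t)^2} \;=\; (1+t)^2 \;=\; 1 + 2t + t^2,$$
so $k[u,v]/(g_0,g_1)$ vanishes in all degrees $\geq 3$. Consequently $k[u,v]_n \subseteq (g_0, g_1)$ for every $n \geq 3$, and in particular every degree-three monomial in $u,v$ lies in $(g_0,g_1)$, which gives the claimed inclusion $(u,v)^3 \subseteq (g_0, g_1)$ in $R$.

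The only real obstacle is pushing the case analysis through the non-degeneracy conditions $d_0^2 + d_1 \neq 0$ and $d_0 d_1 - 1 \neq 0$ from \Cref{Dim 2 generation}; once coprimality is secured, the Hilbert series computation is essentially forced, so I do not expect any further technical difficulty.
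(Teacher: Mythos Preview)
Your argument is correct and takes a genuinely different route from the paper. The paper proves the lemma by brute force: in each of the two cases of \Cref{Dim 2 generation} it writes down explicit linear combinations expressing $u^3$, $u^2v$, $uv^2$, $v^3$ in terms of $g_0$ and $g_1$ (equations~(\ref{monomial case 1}) and~(\ref{monomial case 2})). Your Hilbert series computation is cleaner and more conceptual, and your coprimality check is essentially a repackaging of what was already established in the discussion preceding \Cref{Dim 2 generation}, so you could even shorten it by citing that discussion directly. The trade-off is that the paper's explicit formulas are not dead weight: they are invoked later (see the remark following \Cref{example continued}) as a constructive recipe for producing the coefficients $q_0,q_1,r_0,r_1$ in the decomposition~(\ref{p2,p3 decomp}), which is the algorithmic point of the whole section. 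Your proof establishes the containment but does not supply those formulas, so if one wanted to implement the method one would still need to solve a small linear system or reproduce the paper's identities.
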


\begin{proof}

We write each cubic monomial of $k[u,v]$ in terms of $g_0$ and $g_1$, in both cases of \cref{Dim 2 generation}.
\begin{enumerate}
    \item[(i)] In the first case where $g_0  = uv+d_0v^2$ and $g_1 =u^2+d_1v^2$ with $d_0^2+d_1 \neq 0$, we have the following equations, which are easily verified. 
\begin{equation}\label{monomial case 1}
\arraycolsep=3pt
\begin{array}{rl}
     u^3=&  \big(\frac{-d_0d_1}{d_0^2+d_1}u - \frac{d_1^2}{d_0^2+d_1}v\big) g_0 + \big(u + \frac{d_0d_1}{d_0^2+d_1}v\big)g_1,\\[1ex]
     u^2v=& \big(\frac{d_1}{d_0^2+d_1}u - \frac{d_0d_1}{d_0^2+d_1}v\big)g_0 +\big(\frac{d_0^2}{d_0^2+d_1}v\big) g_1,  \\[1ex]
     uv^2=& \big(\frac{d_0}{d_0^2+d_1}u + \frac{d_1}{d_0^2+d_1}v\big)g_0 -\big(\frac{d_0}{d_0^2+d_1}v \big) g_1, \\[1ex]
     v^3=& \big(\frac{-1}{d_0^2+d_1}u + \frac{d_0}{d_0^2+d_1}v\big)g_0 +\big(\frac{1}{d_0^2+d_1}v\big) g_1.
\end{array}
\end{equation}

    \item[(ii)] 
Similarly, if $g_0  = v^2+d_0 uv$ and $g_1 =  u^2+d_1 uv$ with $d_0d_1-1 \neq 0$, we have the following.
\begin{equation}\label{monomial case 2}
\arraycolsep=3pt
\begin{array}{rl}
     u^3=& \big(\frac{-d_1^2}{d_0d_1-1}u\big)g_0 +\big(u+\frac{d_1}{d_0d_1-1}v\big)g_1, \\[1ex]
     u^2v=&\big(\frac{d_1}{d_0d_1-1}u\big)g_0 -\big(\frac{1}{d_0d_1-1}v\big) g_1,\\[1ex]
     uv^2=& \big(\frac{-1}{d_0d_1-1}u\big) g_0 +\big(\frac{d_0}{d_0d_1-1}v\big)g_1, \\[1ex]
     v^3=&\big(\frac{d_0}{d_0d_1-1}u +v \big)g_0 -\big(\frac{d_0^2}{d_0d_1-1}v\big)g_1.
\end{array}
\end{equation}
\end{enumerate}
Hence we have $(u,v)^3 \subseteq (g_0,g_1)$, in each case of \Cref{Dim 2 generation}.\qedhere
\end{proof}

\begin{rem}
From \Cref{Ideal containments}, we have that $I_U \subseteq (g_0,g_1)$, as $I_U$ is generated in bidegree $(a,b)$ with $b\geq 3$. Hence one has $V(g_0,g_1) \subseteq V(I_U)$, and we note that this is not a contradiction to the assumption that $U$ is basepoint free. Indeed, recall that $g_0, g_1$ is a regular sequence in $k[u,v]$, hence it follows that $V(g_0,g_1)=\emptyset$ in $\P^1$. Alternatively, one may also see that $V(g_0)$ and $V(g_1)$ have no intersection from \Cref{Hartshorne lemma}.
\end{rem}

We now produce an additional pair of syzygies that, along with $Q$, determine the implicit equation of $X_U$. Following the discussion in \Cref{lin syz case}, we note that the statement and proof below are remarkably similar to those of \cite[2.2]{DS16}. We revisit this observation in \Cref{Questions section}, by making a more general conjecture.

\begin{thm}\label{Dim 2 main result thm}
With the conditions of \Cref{General Setting}, assume that $\dim V=2$. The ideal $I_U$ has two syzygies $S_1$, $S_2$ of bidegree $(a,b-2)$ such that $\dim \langle Q,S_1,S_2\rangle_{2a-1,b-1} =2ab$.
\end{thm}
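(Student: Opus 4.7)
The plan is to exploit the special generating set $I_U = (hg_0, hg_1, p_2, p_3)$ provided by \Cref{Dim 2 generation}, construct $S_1, S_2$ directly from decompositions of $p_2$ and $p_3$ in the ideal $(g_0, g_1)$, and then conclude by a dimension count. The essential input is \Cref{Ideal containments}: since $b\geq 3$, the containment $(u,v)^3 \subseteq (g_0,g_1)$ implies $p_2, p_3 \in (g_0,g_1)$. Fix decompositions $p_i = A_{i,0}\,g_0 + A_{i,1}\,g_1$ with $A_{i,j} \in R_{a, b-2}$ for $i = 2, 3$; these are not canonical, but any choice will do. Then set
$$S_1 = \begin{bmatrix} A_{2,0}\\ A_{2,1}\\ -h\\ 0 \end{bmatrix}, \qquad S_2 = \begin{bmatrix} A_{3,0}\\ A_{3,1}\\ 0\\ -h \end{bmatrix},$$
each of bidegree $(a, b-2)$. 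That these are genuine syzygies is immediate:
$$A_{2,0}(hg_0) + A_{2,1}(hg_1) - h\,p_2 = h\bigl(A_{2,0}\,g_0 + A_{2,1}\,g_1 - p_2\bigr) = 0,$$
and analogously for $S_2$.

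The bidegree $(2a-1, b-1)$ piece of $\langle Q, S_1, S_2\rangle$ is the image of the $k$-linear map
$$\Phi \colon R_{2a-1, b-3} \oplus R_{a-1, 1} \oplus R_{a-1, 1} \longrightarrow (Z_1)_{2a-1, b-1}, \qquad (\lambda, \mu_1, \mu_2) \longmapsto \lambda Q + \mu_1 S_1 + \mu_2 S_2,$$
whose source has $k$-dimension $2a(b-2) + 2\cdot 2a = 2ab$. It therefore suffices to show $\Phi$ is injective. Suppose $\lambda Q + \mu_1 S_1 + \mu_2 S_2 = 0$. Reading off the third and fourth components gives $-\mu_1 h = 0$ and $-\mu_2 h = 0$; since $R$ is a domain and $h \neq 0$ (as $hg_0 = p_0 \neq 0$), we conclude $\mu_1 = \mu_2 = 0$. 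Substituting back, the first component forces $\lambda g_1 = 0$, and $g_1 \neq 0$ yields $\lambda = 0$.

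The main obstacle is rather light once the right generating set is in hand: the crucial point is that the last two entries of $S_1, S_2$ are the single nonzero element $\pm h$ while $Q$ vanishes there, which decouples the three syzygies in those slots. Beyond this observation, the argument is just the dimension identity $2a(b-2) + 4a = 2ab$, together with \Cref{Ideal containments} to guarantee existence of the decompositions $p_i = A_{i,0}\,g_0 + A_{i,1}\,g_1$.
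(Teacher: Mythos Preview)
Your proof is correct and follows essentially the same approach as the paper: you use the generating set from \Cref{Dim 2 generation}, invoke \Cref{Ideal containments} to decompose $p_2,p_3$ in $(g_0,g_1)$, build the same syzygies $S_1,S_2$, and conclude by the dimension count $2a(b-2)+2\cdot 2a=2ab$. The only cosmetic difference is in the independence step: the paper observes that deleting the first row of the matrix $[Q\;S_1\;S_2]$ leaves an upper-triangular matrix (hence the columns span a free $R$-module), whereas you argue the same point by reading off the third and fourth entries directly---these are the same observation phrased two ways.
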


\begin{proof}
We proceed in a manner similar to the proof of \cite[2.2]{DS16}. Recall from \Cref{Dim 2 generation} that we may take $p_0=hg_0$ and $p_1=hg_1$. Moreover, as $(p_0,p_1,p_2,p_3)=I_U \subseteq (g_0,g_1)$ by \Cref{Ideal containments}, we may write 
\begin{equation}\label{p2,p3 decomp}
\left\{\begin{array}{rl}
     p_2 &= q_0g_0 +q_1g_1  \\
     p_3 &= r_0g_0 +r_1g_1
\end{array}
\right.
\end{equation}
for some $q_0,q_1,r_0,r_1 \in R_{a,b-2}$. With this, note that $q_0 p_0+q_1p_1 -hp_2 =0$, and also $r_0 p_0+r_1p_1 -hp_3 =0$. Hence both
\[
S_1=\begin{bmatrix}
    q_0\\
    q_1\\
-h\\
    0
\end{bmatrix}\qquad\text{and} \qquad S_2=\begin{bmatrix}
    r_0\\
    r_1\\
0\\
    -h
\end{bmatrix}
\]
are syzygies of $I_U$. Thus the syzygy module of $I_U$ contains the span of the columns of
\begin{equation}\label{Dim 2 M syzygy matrix}
M=\begin{bmatrix}
g_1 & q_0 & r_0\\
-g_0 & q_1 & r_1\\
0 & -h & 0\\
0 & 0 &-h
\end{bmatrix}.
\end{equation}
Deleting the first row of $M$ yields an upper triangular matrix, hence the columns of $M$ span a free $R$-module. The claim will follow once it has been shown that $M_{2a-1,b-1}$ consists of $2ab$ linearly independent columns, the independence following from this previous observation.

We note that the number of columns contributed to $M_{2a-1,b-1}$ and also a matrix representation of $d_1$ in $\Z_{2a-1,b-1}$ by each syzygy agree. Moreover, this is a matter of counting monomials in certain bidegrees. Indeed, as the syzygy $Q$ has bidegree $(0,2)$, it yields
\begin{equation}\label{dim V=2, (0,2) contribution}
h^0(\O_{\P^1\times \P^1}(2a-1,b-3)) = 2a(b-2)
\end{equation}
columns of $d_1$. Similarly, as both $S_1$ and $S_2$ are syzygies of bidegree $(a,b-2)$, they each give rise to
\begin{equation}\label{dim V=2, (a,b-2) contribution}h^0(\O_{\P^1\times \P^1}(a-1,1)) = 2a
\end{equation}
columns of $d_1$. Moreover, the fact that these columns are linearly independent follows as $\{Q,S_1,S_2\}$ spans a free $R$-module.
\end{proof}

\begin{cor}\label{Dim 2 main result cor}
With the assumptions of \Cref{Dim 2 main result thm}, the first differential $d_1$ of the bigraded strand $\Z_{2a-1,b-1}$ of the approximation complex $\Z$ is determined by the syzygies $\{Q,S_1,S_2\}$.
\end{cor}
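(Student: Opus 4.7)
The plan is to derive \Cref{Dim 2 main result cor} as an essentially immediate consequence of \Cref{Dim 2 main result thm} together with \Cref{Botbol det Z}, via a dimension count. The statement to establish is that the syzygies $\{Q,S_1,S_2\}$, viewed with their $R$-shifts, span the entire source $(\Z_1)_{2a-1,b-1}$ of $d_1$, so that the map $d_1$ is completely pinned down by its values on these distinguished syzygies.

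First, I would recall the description of $d_1$ given in equation (\ref{d_1 syz map}): any element of $(\Z_1)_\nu = S\otimes_k (Z_1)_\nu$ is an $S$-linear combination of syzygies of bidegree $\nu$, and $d_1$ sends a syzygy $(a_0,a_1,a_2,a_3)$ to the linear form $a_0T_0+a_1T_1+a_2T_2+a_3T_3$. Consequently, to ``determine $d_1$'' on the bigraded strand $\nu=(2a-1,b-1)$, it suffices to exhibit a spanning set for $(Z_1)_{2a-1,b-1}$ as a $k$-vector space, since evaluating $d_1$ on such a set prescribes it everywhere via (\ref{d_1 syz map}) and $S$-linearity.

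Next, I would invoke \Cref{Botbol det Z}: since $U$ is basepoint free by \Cref{General Setting}, one has $(H_2)_{4a-1,3b-1}=0$, and therefore $d_1:(\Z_1)_{2a-1,b-1}\to(\Z_0)_{2a-1,b-1}$ admits a square matrix representation of size $2ab\times 2ab$. In particular, $\dim_k (Z_1)_{2a-1,b-1}=2ab$. Combining this with \Cref{Dim 2 main result thm}, which produces $S_1,S_2$ of bidegree $(a,b-2)$ with $\dim_k \langle Q,S_1,S_2\rangle_{2a-1,b-1}=2ab$, gives the containment of a $2ab$-dimensional subspace inside a $2ab$-dimensional ambient space, forcing equality $\langle Q,S_1,S_2\rangle_{2a-1,b-1}=(Z_1)_{2a-1,b-1}$. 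Hence the $R$-shifts of $Q$, $S_1$, $S_2$ into bidegree $(2a-1,b-1)$ form a basis of the source of $d_1$.

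Finally, one reads off the conclusion: the $2ab$ columns of the matrix representation of $d_1$ are precisely the images under (\ref{d_1 syz map}) of these shifted copies of $Q,S_1,S_2$, so the differential is fully determined by the three syzygies. There is no serious obstacle in this argument; the only substantive input is the dimensional coincidence provided by Botbol's result and \Cref{Dim 2 main result thm}, and the corollary is essentially a repackaging of the theorem in the language of the approximation complex.
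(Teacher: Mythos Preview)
Your proposal is correct and follows essentially the same route as the paper's proof, which simply cites \Cref{Dim 2 main result thm}, \Cref{Botbol det Z}, and \Cref{Botbol implicit eqn}. The one small point to tighten: the vanishing $(H_2)_{4a-1,3b-1}=0$ in the basepoint-free case is not a direct statement of \Cref{Botbol det Z} but rather follows from \Cref{Botbol implicit eqn} (where $\dim (H_2)_{4a-1,3b-1}=\sum e_x=0$), so you should cite that as well.
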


\begin{proof}
    This follows from \Cref{Dim 2 main result thm}, \Cref{Botbol det Z}, and \Cref{Botbol implicit eqn}.
\end{proof}

We conclude this section with an example, using the syzygies of $I_U$ constructed in \Cref{Dim 2 main result thm} to determine the implicit equation of $X_U$. We note that all of the necessary tools are in \Cref{Prelim Section}.

\begin{ex}\label{example continued}
We continue and finish \Cref{Intro example dim V=2} from the introduction, noting that the columns of (\ref{intro example syzygies}) are precisely the syzygies constructed in the proof of \Cref{Dim 2 main result thm}. With this, the bidegree $(2a-1,b-1) = (3,2)$ component of (\ref{intro example syzygies}) is generated by the image of 
\[
 \arraycolsep=2.5pt\def\arraystretch{0.9}
\left[ \begin{array}{cccccccccccccccccc}
s^3v^2&s^2tv^2&st^2v^2&t^3v^2&0&0&0&0&st^2u^2&st^2uv&t^3u^2&t^3uv\\
      -s^3u^2&-s^2tu^2&-st^2u^2&-t^3u^2&s^3uv&s^3v^2&s^2tuv&s^2tv^2&0&0&0&0\\
      0&0&0&0&-suh& -svh&-tuh&-tv h&0&0&0&0\\
      0&0&0&0&0&0&0&0&-su h&-svh&-tuh&-tvh
      \end{array}\right]
\]
where $h=s^2u+t^2v$. The fact that each syzygy contributes four columns follows from (\ref{dim V=2, (0,2) contribution}) and (\ref{dim V=2, (a,b-2) contribution}). Multiplying by $[T_0, T_1,T_2,T_3]$ and contracting against the monomials of $R$ in bidegree $(3,2)$ shows that a matrix representation of $d_1$ in the bigraded strand $\Z_{3,2}$ is the $12\times 12$ matrix
\[
\arraycolsep=1.5pt\def\arraystretch{0.9}
d_1 =\left[ \begin{array}{cccccccccccc}
      -T_1&0&0&0&-T_2&0&0&0&-T_3&0&0&0\\
      0&0&0&0&T_1&-T_2&0&0&0&-T_3&0&0\\
      T_0&0&0&0&0&T_1&0&0&0&0&0&0\\
      0&-T_1&0&0&0&0&-T_2&0&0&0&-T_3&0\\
      0&0&0&0&0&0&T_1&-T_2&0&0&0&-T_3\\
      0&T_0&0&0&0&0&0&T_1&0&0&0&0\\
      0&0&-T_1&0&0&0&0&0&T_0&0&0&0\\
      0&0&0&0&-T_2&0&0&0&-T_3&T_0&0&0\\
      0&0&T_0&0&0&-T_2&0&0&0&-T_3&0&0\\
      0&0&0&-T_1&0&0&0&0&0&0&T_0&0\\
      0&0&0&0&0&0&-T_2&0&0&0&-T_3&T_0\\
      0&0&0&T_0&0&0&0&-T_2&0&0&0&-T_3\end{array}\right]
\]
with determinant
$$(T_0^3T_1^3-T_0^4T_2^2-2\,T_0^2T_1^2T_2T_3-T_1^4T_3^2)^2.$$

By \Cref{Botbol det Z} and \Cref{Botbol implicit eqn}, the implicit equation of $X_U$ is $F=T_0^3T_1^3-T_0^4T_2^2-2\,T_0^2T_1^2T_2T_3-T_1^4T_3^2$ and the degree of the rational map defined by $U$ is $\deg \phi_U= 2$.
\end{ex}

\begin{rem}
Following the procedure above, one needs only to determine the syzygies $\{Q,S_1,S_2\}$ to determine the implicit equation of $X_U$. Following their construction in \Cref{Dim 2 main result thm}, recall from \Cref{Find d0 and d1} that the polynomials $g_0$, $g_1$, and $h$ of \Cref{Dim 2 generation} may be found easily from the coefficient matrix (\ref{coefficient matrix}). Additionally, one may write $p_2$ and $p_3$ in terms of the monomial basis $\{u^3,u^2v,uv^2,v^3\}$, and then use the equations of \Cref{Ideal containments} to find suitable $q_0,q_1,r_0,r_1$ in (\ref{p2,p3 decomp}). In particular, this method is significantly less computationally intensive than computing the entire syzygy module of $I_U$ to produce $\Z_{2a-1,b-1}$.
\end{rem}


\section{Syzygies in the case $\dim V=3$}\label{dim 3 section}

We now consider the second case of \Cref{Dimension of V} and proceed under the conditions of \Cref{General Setting}, with the assumption that $\dim V =3$. As before, we aim to produce a subset of syzygies of $I_U$ that determines the first differential of $\Z_{2a-1,b-1}$, and hence the implicit equation of $X_U$, following \Cref{Botbol implicit eqn}.

With the assumption that $\dim V=3$, recall from \Cref{Dim 3 generation} that the ideal $I_U$ may be generated as $I_U=(\alpha u,\beta v - \alpha u, -\beta u,p_3)$ for some $\alpha, \beta \in R_{a,b-2}$. By selecting this generating set, the syzygy $Q$ of bidegree $(0,2)$ in \Cref{General Setting} is then
\begin{equation}
Q=\begin{bmatrix}\label{Quad Syz dim V=3}
    u^2\\uv\\v^2\\0
\end{bmatrix}    
\end{equation}
following (\ref{syz sum}).

Similar to the approach of the previous section, we begin our treatment by examining the syzygies of the subideal $(\alpha v,\beta v - \alpha u, -\beta u)$ of $I_U$, generated by the basis elements of $V$ (\ref{alpha-beta columns}). First however, we provide a short lemma, which is particularly useful for tensor product surfaces free of basepoints.

\begin{lemma}[{\cite[V.1.4.3]{Hartshorne}}]\label{Hartshorne lemma}
Let $f\in R_{a,b}$ and $g\in R_{c,d}$ such that $\gcd(f,g)=1$. The curves $V(f)$ and $V(g)$ in $\P^1\times \P^1$ meet at $ad+bc$ points.
\end{lemma}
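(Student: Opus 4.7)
The plan is to invoke the standard Bezout-style intersection theory on the smooth projective surface $\P^1\times \P^1$. Since $\gcd(f,g)=1$ in the unique factorization domain $R=k[s,t,u,v]$, the polynomials $f,g$ form a regular sequence, so $(f,g)$ has height two and hence $V(f,g)\subseteq \P^1\times \P^1$ is zero-dimensional. The task reduces to computing the number of intersection points, counted with multiplicity, via the intersection product on the Picard group.

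First I would identify the relevant divisor classes. The group $\mathrm{Pic}(\P^1\times \P^1)\cong \ZZ\oplus \ZZ$ is generated by the two ruling classes $H_1=[\{*\}\times \P^1]$ and $H_2=[\P^1\times\{*\}]$, and a bihomogeneous polynomial of bidegree $(a,b)$ cuts out a divisor of class $aH_1+bH_2$. Thus $V(f)\sim aH_1+bH_2$ and $V(g)\sim cH_1+dH_2$.

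Next I would record the intersection pairing on the generators. Two distinct fibers of the same ruling are disjoint, giving $H_1\cdot H_1=H_2\cdot H_2=0$, while a vertical and a horizontal fiber meet transversally at a unique point, so $H_1\cdot H_2=1$. Bilinearity then yields
$$V(f)\cdot V(g)=(aH_1+bH_2)\cdot (cH_1+dH_2)=ad+bc.$$

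The main subtlety is matching this intersection number with an honest point count. Here the hypothesis $\gcd(f,g)=1$ is doing the work: it ensures that $V(f)$ and $V(g)$ share no common component, so the scheme-theoretic intersection is zero-dimensional with no excess components, and its total length equals the intersection number $ad+bc$. If one prefers a purely algebraic route, the same total may be extracted from the Koszul resolution
$$0\to R(-a-c,-b-d)\to R(-a,-b)\oplus R(-c,-d)\to R\to R/(f,g)\to 0$$
on the regular sequence $f,g$, by reading off the constant term of the bigraded Hilbert polynomial of $R/(f,g)$. This is the step I expect to require the most care in a fully rigorous write-up, since one must justify that each point contributes its scheme-theoretic multiplicity, but it follows cleanly once the regular sequence property is in hand.
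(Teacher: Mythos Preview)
The paper does not supply a proof of this lemma at all; it simply cites \cite[V.1.4.3]{Hartshorne} and moves on. Your argument is correct and is essentially the standard one: identify $\mathrm{Pic}(\P^1\times\P^1)\cong\ZZ H_1\oplus\ZZ H_2$, compute $(aH_1+bH_2)\cdot(cH_1+dH_2)=ad+bc$ from the pairing $H_i\cdot H_j=\delta_{ij}$ (with indices swapped), and use $\gcd(f,g)=1$ to guarantee the intersection is zero-dimensional so that the intersection number genuinely counts points with multiplicity. This is precisely the content of the cited result in Hartshorne, so there is nothing to compare beyond noting that you have filled in what the paper leaves as a reference.
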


With this, we investigate the syzygy module of the subideal $J= (\alpha v,\beta v - \alpha u, -\beta u)$ of $I_U$.

\begin{prop}\label{J g2p}
The ideal $J= (\alpha v,\beta v - \alpha u, -\beta u)$ is a perfect $R$-ideal of height 2.
\end{prop}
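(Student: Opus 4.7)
The plan is to establish first that $\hgt J = 2$, and then exhibit a length-$2$ free resolution of $R/J$; together these show that $J$ is a perfect ideal of height $2$. The key structural input is the factorization from (\ref{alpha-beta columns}).

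\textbf{Height.} Since every generator of $J$ lies in $(u, v)$, we have $\hgt J \leq 2$. The key sublemma is that $\alpha$ and $\beta$ share no nonconstant common factor, hence form an $R$-regular sequence of height $2$. Suppose for contradiction that an irreducible $g$ of positive bidegree divides both. Then $g\mid f_i$ for each $i$, so $g$ divides every element of the three-dimensional subspace $V \subseteq U$. Completing $\{f_0, f_1, f_2\}$ to a basis $\{f_0, f_1, f_2, p\}$ of $U$, basepoint-freeness forces $V(g) \cap V(p) = \emptyset$ in $\P^1 \times \P^1$, which by \Cref{Hartshorne lemma} forces $g \mid p$; then $g$ divides every element of $U$, contradicting $V(g) \subseteq V(I_U) = \emptyset$ (a polynomial of positive bidegree has nonempty vanishing locus in $\P^1 \times \P^1$). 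A case analysis on irreducibles $h$ with $J \subseteq (h)$ (using that neither $u,v$ nor $\alpha,\beta$ share a common factor) then yields $\hgt J \geq 2$.

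\textbf{Perfection via explicit Hilbert--Burch.} We construct a $3\times 2$ matrix $M$ whose signed maximal minors are $\pm(f_0, f_1, f_2)$. Take the first column of $M$ to be $(u^2, uv, v^2)^{\mathrm T}$, the syzygy from (\ref{syz sum}); the entries of the second column $(x, y, z)^{\mathrm T}$ must lie in $R_{a, b-2}$, and a direct minor computation shows they must satisfy
\[
uz - vy = \alpha, \qquad vx - uy = \beta,
\]
from which the third relation $v^2 x - u^2 z = \beta v - \alpha u$ follows formally. Using $\alpha, \beta \in (u, v)$, write $\alpha|_{u=0} = v^{b-1} g(s,t)$ and $\beta|_{v=0} = u^{b-1} h(s,t)$, and set
\[
y \;=\; -\,v^{b-2} g(s,t) \;-\; u^{b-2} h(s,t).
\]
The hypothesis $b \geq 3$ ensures $y \in R_{a, b-2}$ and that both $\beta + uy \in (v)$ and $\alpha + vy \in (u)$, so $x = (\beta + uy)/v$ and $z = (\alpha + vy)/u$ are polynomials in $R_{a, b-2}$. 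By the Hilbert--Burch theorem, $M$ yields a minimal free resolution
\[
0 \;\longrightarrow\; R^2 \;\xrightarrow{\ M\ }\; R^3 \;\xrightarrow{[f_0,\,f_1,\,f_2]}\; R \;\longrightarrow\; R/J \;\longrightarrow\; 0,
\]
so $R/J$ is Cohen--Macaulay of codimension $2$ and $J$ is perfect.

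\textbf{Main obstacle.} The main technical step is constructing the second minimal syzygy $(x, y, z)$. This amounts to a Chinese-remainder-style matching of the boundary data of $\alpha$ and $\beta$ along $\{u = 0\}$ and $\{v = 0\}$; the assumption $b \geq 3$ is essential, as it forces both boundary data to vanish at $\{u = v = 0\}$, rendering the compatibility needed to construct $y$ automatic. Without this hypothesis the second minimal syzygy of the required bidegree may fail to exist, in which case the Hilbert--Burch resolution would have to be replaced by something longer.
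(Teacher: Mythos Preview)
Your proof is correct and follows the same two-step skeleton as the paper (height $\geq 2$, then an explicit Hilbert--Burch matrix), but both steps are executed differently. For height, the paper argues in one stroke: a non-unit common factor $h$ of the $f_i$ forces $I_U \subseteq (h, p_3)$ with $\gcd(h,p_3)=1$, and \Cref{Hartshorne lemma} then gives $V(h,p_3) \neq \emptyset$, contradicting basepoint-freeness. You instead first prove $\gcd(\alpha,\beta)=1$ by the same lemma and then run a separate case analysis on irreducibles $h$ with $J \subseteq (h)$; this is slightly longer but has the bonus of establishing \Cref{alpha beta reg seq.} as an input rather than deducing it afterward as a corollary. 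For the second Hilbert--Burch column, the paper uses the decomposition $\alpha = q_0 u + q_1 v^2$, $\beta = r_0 u^2 + r_1 v$ (valid since $b \geq 3$ places $\alpha,\beta$ in $(u,v)^2 = (u,v^2)\cap(u^2,v)$) and reads the column off directly as $(-q_1 u + r_1,\, -r_0 u - q_1 v,\, q_0 - r_0 v)^T$; your boundary-matching construction via $y = -v^{b-2}g(s,t) - u^{b-2}h(s,t)$ is a different but equally valid lift, exploiting $b \geq 3$ in the same essential way. One small overstatement in your closing commentary: without $b \geq 3$ it is not that ``the Hilbert--Burch resolution would have to be replaced by something longer''---$J$ could still be height-$2$ perfect, only with a second syzygy of a different bidegree than $(a,b-2)$.
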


\begin{proof}
Since $\alpha, \beta \in R_{a,b-1}$ and $b \geq 3$, we have that $\alpha,\beta\in (u,v)^2 = (u,v^2)\cap(u^2,v)$. Hence we may write
\begin{equation}\label{alpha beta decomp}
\left\{\begin{array}{rl}
     \alpha= &  q_0u+q_1v^2  \\
     \beta= &  r_0u^2 +r_1v
\end{array}
\right.
\end{equation}
for some $q_0,r_1 \in R_{a,b-2}$ and $q_1,r_0 \in R_{a,b-3}$.
With this, notice that the generators of $J$ are precisely the signed $2\times 2$ minors of 
\begin{equation}\label{HB matrix}   
\begin{bmatrix}
u^2&-q_1u +r_1 \\
uv & -r_0u -q_1v\\
v^2& q_0-r_0v
\end{bmatrix}
\end{equation}
and so the claim will follow from the Hilbert-Burch theorem \cite[20.15]{Eisenbud}, once it has been shown that $\hgt J\geq 2$. 

By \Cref{f0 and f2 nonzero} we have that $J\neq 0$, hence it suffices to show that $\hgt J \neq 1$. Suppose, to the contrary, that $\hgt J=1$. Thus $f_0,f_1,f_2$ have a non-unit common factor, and write $h$ to denote the greatest common factor. Notice that $I_U\subseteq (h,p_3)$, hence $h$ and $p_3$ have no common factor, as $ \hgt I_U =2$ since $\sqrt{I_U} = (s,t)\cap (u,v)$. However, as $h$ is a non-unit, we may write $\bideg h= (c,d)$ for either $c\geq 1$ or $d\geq 1$ and, from the containment of ideals, we have $V(I_U) \supseteq V(h,p_3)$. Thus by \Cref{Hartshorne lemma} it follows that $V(I_U)$ contains $ad+bc >0$ points, which contradicts the assumption that $I_U$ is basepoint free.
\end{proof}

\begin{cor}\label{alpha beta reg seq.}
The polynomials $\alpha, \beta$ form an $R$-regular sequence.   
\end{cor}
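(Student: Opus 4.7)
The plan is to reduce this corollary to a height calculation using the preceding proposition together with the Cohen-Macaulay property of the polynomial ring $R$.

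First I would observe the trivial containment of ideals $J = (\alpha v,\beta v - \alpha u, -\beta u) \subseteq (\alpha,\beta)$, which immediately gives $\hgt(\alpha,\beta) \geq \hgt J = 2$ by \Cref{J g2p}. On the other hand, Krull's height theorem yields $\hgt(\alpha,\beta) \leq 2$ since the ideal is two-generated, so in fact $\hgt(\alpha,\beta) = 2$.

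Since $R = k[s,t,u,v]$ is Cohen-Macaulay, a sequence of elements is regular if and only if the height of the ideal it generates equals the number of elements in the sequence. Combined with the fact (from \Cref{f0 and f2 nonzero}) that neither $\alpha$ nor $\beta$ is zero, this immediately gives that $\alpha,\beta$ is an $R$-regular sequence.

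There is no genuine obstacle here; the work has already been done in proving that $J$ has height $2$. The corollary is essentially a one-line consequence of the containment $J \subseteq (\alpha,\beta)$ combined with standard commutative algebra for Cohen-Macaulay rings.
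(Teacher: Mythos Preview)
Your proof is correct and follows essentially the same approach as the paper, which simply states that the result follows from \Cref{J g2p} together with the containment $J\subseteq(\alpha,\beta)$. You have merely spelled out the standard commutative algebra (Krull's height theorem and the Cohen--Macaulay property of $R$) that the paper leaves implicit.
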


\begin{proof}
This follows from \Cref{J g2p}, noting that $J\subseteq (\alpha, \beta)$.
\end{proof}

We may now state the main result of this section. Similar to \Cref{Dim 2 main result thm}, we produce a set of additional syzygies which, with $Q$ in (\ref{Quad Syz dim V=3}), are enough to determine the implicit equation of $X_U$.

\begin{thm}\label{Dim 3 main result thm}
With the assumptions of \Cref{General Setting}, assume that $\dim V=3$. The ideal $I_U$ has a syzygy $S_1$ of bidegree $(a,b-2)$ and two syzygies $S_2$, $S_3$ of bidegree $(a,b-1)$ such that $\dim \langle Q,S_1,S_2,S_3\rangle_{2a-1,b-1} =2ab$.
\end{thm}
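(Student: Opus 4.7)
My plan is to parallel the strategy used in the proof of \Cref{Dim 2 main result thm}. The Hilbert-Burch matrix (\ref{HB matrix}) of $J=(f_0,f_1,f_2)$, furnished by \Cref{J g2p}, already supplies two syzygies of $I_U$ with zero last coordinate: the Koszul-type syzygy $Q=[u^2,uv,v^2,0]^T$ of bidegree $(0,2)$, and the second Hilbert-Burch syzygy
\[
S_1=\bigl[-q_1 u + r_1,\,-r_0 u - q_1 v,\,q_0 - r_0 v,\,0\bigr]^T
\]
of bidegree $(a,b-2)$. These two vectors generate the free rank-two module $\syz(J)$.

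To produce $S_2,S_3$ of bidegree $(a,b-1)$ I aim to exhibit $\alpha,\beta\in J:p_3$. The key step is to derive the six identities $\alpha u^2=-uf_1-vf_2$, $\alpha uv=uf_0$, $\alpha v^2=vf_0$, $\beta u^2=-uf_2$, $\beta uv=-vf_2$, $\beta v^2=uf_0+vf_1$ directly from $f_0=\alpha v$, $f_1=\beta v-\alpha u$, $f_2=-\beta u$, giving the inclusion $(\alpha,\beta)(u,v)^2\subseteq J$. Because $b\geq 3$ implies $p_3\in(u,v)^b\subseteq(u,v)^2$, I can decompose $p_3=u^2\xi+uv\eta+v^2\zeta$ for some $\xi,\eta,\zeta\in R_{a,b-2}$, substitute via the six identities, and read off
\[
S_2=\bigl[u\eta+v\zeta,\,-u\xi,\,-v\xi,\,-\alpha\bigr]^T,\qquad S_3=\bigl[u\zeta,\,v\zeta,\,-u\xi-v\eta,\,-\beta\bigr]^T,
\]
whose entries are bihomogeneous in $R_{a,b-1}$ as required.

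For the dimension claim, the count $2a(b-2)+2a+a+a=2ab$ accounts for the individual contributions of $Q,S_1,S_2,S_3$ to the $(2a-1,b-1)$ strand, so it remains to prove $k$-linear independence there. A relation $\lambda_1 Q+\lambda_2 S_1+\lambda_3 S_2+\lambda_4 S_3=0$ in this bidegree has $\lambda_3,\lambda_4\in R_{a-1,0}$, and the last coordinate forces $\alpha\lambda_3+\beta\lambda_4=0$. By \Cref{alpha beta reg seq.}, any syzygy of $(\alpha,\beta)$ is an $R$-multiple $t(\beta,-\alpha)$ of the Koszul syzygy; but then $t\beta\in R_{a-1,0}$ forces $t\in R_{-1,1-b}=0$, whence $\lambda_3=\lambda_4=0$. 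The remaining equation $\lambda_1 Q+\lambda_2 S_1=0$ is a relation among the generators of the free module $\syz(J)$, so $\lambda_1=\lambda_2=0$.

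The principal obstacle I anticipate is the bookkeeping required to confirm that the decomposition $p_3=u^2\xi+uv\eta+v^2\zeta$ together with the six identities genuinely produces bihomogeneous syzygies of the asserted bidegree; everything else follows by straightforward linear algebra once $(\alpha,\beta)(u,v)^2\subseteq J$ and the regular-sequence property of $\alpha,\beta$ are in hand.
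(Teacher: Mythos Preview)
Your argument is correct, and in one respect it is cleaner than the paper's. Both proofs build $S_1$ from the second column of the Hilbert--Burch matrix of $J$, and both produce $S_2,S_3$ by writing $p_3$ in terms of quadratic monomials in $u,v$ and using identities expressing $\alpha\cdot(u,v)^2$ and $\beta\cdot(u,v)^2$ inside $J$; your decomposition $p_3=u^2\xi+uv\eta+v^2\zeta$ is the obvious generalization of the paper's choice $p_3=m_0u^2+m_1v^2$ (which simply takes $\eta=0$), and your six identities are the computations the paper leaves implicit. The substantive divergence is in the linear-independence step. The paper assembles $Q,S_1,S_2,S_3$ into a matrix $M$, writes down an explicit kernel vector $N$ of bidegree $(2a,2b-2)$, invokes the Buchsbaum--Eisenbud acyclicity criterion (using \Cref{J g2p} and \Cref{alpha beta reg seq.}) to certify that the complex $0\to R\xrightarrow{N}R^4\xrightarrow{M}R^4$ is exact, and then observes that $N$ vanishes in bidegree $(2a-1,b-1)$. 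Your route is more direct: read off the fourth coordinate to get $\alpha\lambda_3+\beta\lambda_4=0$, use the regular-sequence property of $\alpha,\beta$ plus the negative bidegree of the required Koszul multiplier to force $\lambda_3=\lambda_4=0$, and then finish via freeness of $\syz(J)$. This bypasses Buchsbaum--Eisenbud entirely and keeps the argument at the level of elementary degree bookkeeping; the paper's approach, on the other hand, yields the extra structural information that $\ker M$ is globally rank one with an explicit generator.
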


\begin{proof}
Recall from \Cref{Dim 3 generation} that we may take $p_0=\alpha v$, $p_1=\beta v - \alpha u$, and $p_2=-\beta u$. With this, we begin with the syzygy in bidegree $(a,b-2)$. By \Cref{J g2p} and the Hilbert-Burch theorem \cite[20.15]{Eisenbud}, the matrix (\ref{HB matrix}) is precisely the syzygy matrix of $J= (\alpha v,\beta v - \alpha u, -\beta u)$. Hence, we may extend its columns to syzygies on $I_U$. Doing so yields $Q$ in (\ref{Quad Syz dim V=3}) and also
\[
S_1=\begin{bmatrix}
-q_1u +r_1 \\
-r_0u -q_1v\\
q_0-r_0v\\
0
\end{bmatrix}
\]
as a syzygy of $I_U$ in bidegree $(a,b-2)$.

For the remaining two syzygies, we must involve the last generator of $I_U$. As $\bideg p_3 = (a,b)$ and $b\geq 3$, we note that $p_3\in (u^2,v^2)$. Thus we may write 
\begin{equation}\label{dim 3 - p3 decomp}
 p_3= m_0u^2+m_1v^2   
\end{equation}
for some $m_0,m_1 \in R_{a,b-2}$. With this, notice that $-m_1v p_0+m_0up_1+m_0vp_2+\alpha p_3=0$ and also $m_1u p_0+m_1vp_1-m_0up_2-\beta p_3=0$. Hence 
\[
S_2=\begin{bmatrix}
    -m_1v\\ m_0u\\ m_0 v\\ \alpha
\end{bmatrix} \qquad\text{and} \qquad S_3=\begin{bmatrix}
    m_1u\\ m_1v\\ -m_0u\\-\beta
\end{bmatrix}
\]
are syzygies of $I_U$ in bidegree $(a,b-2)$.

Consider the matrix 
\begin{equation}
M=\begin{bmatrix}
 u^2& -q_1u +r_1 &-m_1v &m_1u\\
uv &-r_0u -q_1v &m_0u &m_1v\\
v^2& q_0-r_0v &m_0v&-m_0u\\
0  &0 &\alpha & -\beta
\end{bmatrix}
\end{equation}
with columns $\{Q,S_1,S_2,S_3\}$. We first show that the columns of $M_{2a-1,b-1}$ are linearly independent, i.e. $M_{2a-1,b-1}$ is injective. We note that $M$ itself is not injective, and we claim that its kernel is spanned by
\[
N=\begin{bmatrix}
m_0(q_1u-r_1) +m_1(r_0v-q_0) \\
p_3\\
\beta\\
\alpha
\end{bmatrix}.
\]
To verify this, consider the sequence of bigraded $R$-modules
\begin{equation}\label{MN complex}
 0 \rightarrow R(-2a,-2b+2) \overset{N}{\longrightarrow} \begin{array}{c}
 R(0,-2)\\
 \oplus \\
R(-a,-b+2)\\
 \oplus \\
R(-a,-b+1)^2\\
\end{array} \overset{M}{\longrightarrow} \,
R^4.
\end{equation}
A direct computation shows this is a complex. Moreover, this complex is exact by the Buchsbaum-Eisenbud acyclicity criterion \cite[Cor. 1]{BE73}, with the required conditions following from \Cref{J g2p} and \Cref{alpha beta reg seq.}. As (\ref{MN complex}) is a bigraded free complex, we may consider the bigraded strand in bidegree $(2a-1,b-1)$. By degree considerations it then follows that $N_{2a-1,b-1} =0$, hence $M_{2a-1,b-1}$ is indeed injective.

 Now that the columns of $M_{2a-1,b-1}$ have been shown to be linearly independent, we need only count them to verify the assertion. As previously noted, this is also the number of columns contributed to a matrix representation of $d_1$, by the syzygies above. The syzygy $Q$ has bidegree $(0,2)$, hence it yields
$$h^0(\O_{\P^1\times \P^1}(2a-1,b-3)) = 2a(b-2)$$
columns of $d_1$. Similarly, the syzygy $S_1$ in bidegree $(a,b-2)$ yields
$$h^0(\O_{\P^1\times \P^1}(a-1,1)) = 2a$$
columns. Lastly, the syzygies $S_2$ and $S_3$ in bidegree $(a,b-1)$ each give rise to
$$h^0(\O_{\P^1\times \P^1}(a-1,0)) = a$$
columns of $d_1$, and the claim follows.
\end{proof}

\begin{cor}\label{Dim 3 main result cor}
With the assumptions of \Cref{Dim 3 main result thm}, the first differential $d_1$ of the bigraded strand $\Z_{2a-1,b-1}$ of the approximation complex $\Z$ is determined by the syzygies $\{Q,S_1,S_2,S_3\}$. 
\end{cor}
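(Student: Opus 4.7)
The plan is to mirror the proof of \Cref{Dim 2 main result cor}, since the structural argument is identical in this case. The substantive work has already been carried out in \Cref{Dim 3 main result thm}, which produces four syzygies $\{Q, S_1, S_2, S_3\}$ contributing $2ab$ linearly independent columns to a matrix representation of $d_1 \colon (\Z_1)_{2a-1,b-1} \to (\Z_0)_{2a-1,b-1}$, via the explicit description of $d_1$ given in \eqref{d_1 syz map}. The corollary only asks me to observe that this contribution already exhausts the source of $d_1$.

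To that end, I would invoke \Cref{Botbol det Z}: because $U$ is basepoint free by \Cref{General Setting}, the obstruction $(H_2)_{4a-1,3b-1}$ vanishes, forcing $d_1$ to admit a square matrix representation of size exactly $2ab \times 2ab$. Comparing with the column count from \Cref{Dim 3 main result thm}, namely $2a(b-2) + 2a + a + a = 2ab$, the columns contributed by $\{Q, S_1, S_2, S_3\}$ already fill the full source. Consequently, any further syzygy of $I_U$ appearing in the relevant bidegrees can only produce linear combinations of the existing columns, and hence $d_1$ is completely determined by the four distinguished syzygies. Applying \Cref{Botbol implicit eqn} in addition yields that the determinant of this $2ab \times 2ab$ matrix is a power of the implicit equation of $X_U$.

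I do not anticipate any genuine obstacle. The nontrivial content, namely the explicit construction of $S_1, S_2, S_3$ and the verification of $k$-linear independence in bidegree $(2a-1, b-1)$ via the Buchsbaum--Eisenbud acyclicity of the complex introduced in the proof of \Cref{Dim 3 main result thm}, has already been handled there. The present corollary is a bookkeeping step that packages those ingredients with the general framework of \Cref{Prelim Section}.
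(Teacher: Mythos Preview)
Your proposal is correct and follows exactly the same route as the paper: the paper's proof of this corollary is the single sentence ``This follows from \Cref{Dim 3 main result thm}, \Cref{Botbol det Z}, and \Cref{Botbol implicit eqn},'' which is precisely the combination of ingredients you invoke and expand upon.
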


\begin{proof}
    This follows from \Cref{Dim 3 main result thm}, \Cref{Botbol det Z}, and \Cref{Botbol implicit eqn}.
\end{proof}

\begin{rem}
Once the syzygies $\{Q,S_1,S_2,S_3\}$ have been constructed, the procedure to determine a matrix representation of $d_1$ in $\Z_{2a-1,b-1}$, and obtain the implicit equation of $X_U$, is the same as the process in \Cref{example continued}. Following \Cref{Dim 3 main result thm}, we note that the entries of these syzygies can be found easily from the decompositions (\ref{alpha-beta columns}), (\ref{alpha beta decomp}), and (\ref{dim 3 - p3 decomp}). As a consequence, finding the implicit equation with this procedure is computationally simple, compared to computing the full syzygy module of $I_U$ in order to produce $\Z_{2a-1,b-1}$.
\end{rem}


\section{Further observations and questions}\label{Questions section}

In this final section, we address some additional questions and possibilities for future directions, related to the results presented here. As the primary technique of this article is the construction of the subspace $V$ associated to a given syzygy, one natural question is how to extend this method to more general settings. A further question is whether $V$, or rather its dimension, is always a sufficient invariant, as it was here.

The first question is easily answered. Suppose one has $U\subseteq H^0(\O_{\P^1\times \P^1}(a,b))$ with basis $\{p_0,p_1,p_2,p_3\}$, as before. If $I_U$ has a syzygy $C$ with entries in $R=k[s,t,u,v]$ of a given bidegree, say $(c,d)$, then one may repeat the process described in \Cref{Quad Syz Section}. Letting $\{m_0,\ldots,m_n\}$ denote the monomials in $R_{c,d}$, one has
$$\sum_{j=0}^3 \big(\sum_{i=0}^n a_{ij} m_i\big) p_j=0$$
for some $a_{ij}\in k$. This may be rearranged as
$$0=\sum_{j=0}^3 \big(\sum_{i=0}^n a_{ij} m_i\big) p_j = \sum_{i=0}^n \big( \sum_{j=0}^3 a_{ij} p_j\big) m_i= \sum_{i=0}^n f_i m_i,$$
by letting $f_i=\sum_{j=0}^3 a_{ij} p_j$, for $0\leq i\leq n$. Then it is clear that $[f_0,\ldots,f_n]$ is a syzygy of the monomial ideal $J=(m_0,\ldots,m_n)$. Hence one proceeds by examining $\syz(J)$, noting that a free resolution (non-minimal in general) of $J$ is readily available from Taylor's resolution \cite[Ex. 17.11]{Eisenbud}. In particular, the study of $\syz(J)$ is very approachable in this setting.

Moreover, one may write $V$ to denote the subspace $V=\Span_k\{f_0,\ldots,f_n\}$ of $U$. An argument similar to the proof of \Cref{Dimension of V} shows that $2\leq \dim V\leq 4$. Moreover, similar to \Cref{Generation - dim V remark}, $\dim V$ agrees with the rank of the coefficient matrix $A=(a_{ij})$, and this is the maximum number of the $f_i$ which may be taken as part of a minimal generating set of $I_U$.

Whereas the formulation of such a subspace $V$ associated to a syzygy is quite general, we note that its dimension is rarely a sufficient invariant, as it was in \Cref{dim 2 section,dim 3 section}, even for syzygies of low total degree. We illustrate this next in a setting where this fails to be the case. However, we then consider a setting where it is believed that $\dim V$ is the correct invariant, in an effort to extend the work presented here and in \cite{DS16}.

\subsection{Syzygies of bidegree \boldmath{$(1,1)$}}
Recall that the assumptions of \Cref{General Setting} may be modified to address tensor product surfaces with a syzygy in bidegree $(2,0)$. Thus the natural question, which has not yet been answered, is how to proceed in the remaining case of a quadratic syzygy, namely in bidegree $(1,1)$. We may proceed in the manner above, however we will see that the behavior of the remaining syzygies is more erratic.

If the ideal $I_U \subseteq R=k[s,t,u,v]$ has such a syzygy in bidegree $(1,1)$, we may write
$$\sum_{i=0}^3(a_isu+b_isv+c_itu+d_itv) p_i=0.$$
for some $a_i,b_i,c_i,d_i \in k$. Rewriting this, we have
\begin{equation}\label{11 syzygy sum}
0=\sum_{i=0}^3(a_isu+b_isv+c_itu+d_itv)p_i= (\sum_{i=0}^3a_i p_i)su + (\sum_{i=0}^3 b_i p_i)sv + ( \sum_{i=0}^3 c_i p_i) tu +  ( \sum_{i=0}^3 d_i p_i) tv.
\end{equation}
Writing $f_0 = \sum_{i=0}^3a_i p_i$, $f_1=\sum_{i=0}^3b_i p_i$, $f_2 = \sum_{i=0}^3c_i p_i$, and $f_3 = \sum_{i=0}^3d_i p_i$, we see that $[f_0, f_1, f_2,f_3]$ is a syzygy on $(su,sv,tu,tv)$. A resolution of this ideal is easily computed, from which one has
\[
\begin{bmatrix}
    f_0\\
    f_1\\
    f_2\\
    f_3
\end{bmatrix} = \alpha \begin{bmatrix}
    v\\
    -u\\
    0\\
    0
\end{bmatrix} + \beta \begin{bmatrix}
    0\\
    0\\
    v\\
    -u
\end{bmatrix}
+ \gamma\begin{bmatrix}
    t\\
    0\\
    -s\\
    0
\end{bmatrix}
+\delta\begin{bmatrix}
    0\\
    t\\
    0\\
    -s
\end{bmatrix}
=\begin{bmatrix}
    \alpha v +\gamma t\\
    -\alpha u +\delta t\\
    \beta v-\gamma s\\
    -\beta u -\delta s
\end{bmatrix}
\]
for $\alpha,\beta \in R_{a,b-1}$ and $\gamma,\delta \in R_{a-1,b}$. 

Writing $V = \Span_k\{f_0,f_1,f_2,f_3\}$, we show that $\dim V$ alone is inadequate to dictate the behavior of the remaining syzygies of $I_U$ in this setting. We consider the following two examples, with $\dim V=4$ in both, and so $I_U$ may be generated as $I_U=(f_0,f_1,f_2,f_3)$.

\begin{ex}\label{11-Syz Example 1}
Consider $U \subseteq H^0(\O_{\P^1\times \P^1}(3,3))$ with basis $\{p_0,p_1,p_2,p_3\}$ where
\[
\begin{array}{rl}
     p_0=& -2t^3u^2v-s^2tv^3-t^3v^3 \\
     p_1=& -s^2tu^3+t^3u^3\\
     p_2=& -s^2tuv^2+s^3v^3+st^2v^3-t^3v^3 \\
     p_3=& s^3u^3+st^2u^3+s^2tu^2v+t^3uv^2.
\end{array}
\]
A computation through \texttt{Macaulay2} \cite{Macaulay2} shows that $U$ is basepoint free and $I_U$ has a syzygy in bidegree $(1,1)$. Formulating the subspace $V$ as above, one computes the rank of the coefficient matrix and sees that $\dim V=4$. The ideal $I_U$ has 10 minimal first syzygies in bidegree 
\[
\begin{array}{cccccccccc}
(1,1),&(0,5),&(2,3),&(2,3),&(3,2),&(3,2),&(5,2),&(5,2),&(6,1),&(6,1).
\end{array}
\]
In particular, the only syzygies which can contribute to a matrix representation of $d_1$ in $\Z_{5,2}$ are the five in bidegrees $(1,1)$, $(3,2)$, and $(5,2)$. Further computations show that this is the case, and these five syzygies do determine $d_1$, and hence the implicit equation, in a manner similar to the process outlined in \Cref{example continued}.
\end{ex}

\begin{ex}\label{11-Syz Example 2}
Consider $U \subseteq H^0(\O_{\P^1\times \P^1}(3,3))$ with basis $\{p_0,p_1,p_2,p_3\}$ where
\[
\begin{array}{rl}
     p_0=& -t^3u^3-s^3u^2v-st^2u^2v-st^2uv^2 \\
     p_1=& s^3u^3+st^2u^3-st^2uv^2-t^3v^3\\
     p_2=& st^2u^3-s^2tu^2v+s^2tuv^2-s^3v^3\\
     p_3=& s^2tu^3+s^3uv^2+s^2tuv^2+st^2v^3.
\end{array}
\]
Similar computations show that $U$ is basepoint free, $I_U$ has a syzygy in bidegree $(1,1)$, and also $\dim V=4$. The ideal $I_U$ has 10 minimal first syzygies in bidegree    
\[
\begin{array}{cccccccccc}
(1,1),& (1,9), &(1,5),& (2,5), & (2,3),& (2,3),& (3,2),& (3,2),&  (5,1),& (7,1).
\end{array}
\]
Moreover, the only syzygies which can contribute to $d_1$ in $\Z_{5,2}$ are the four in bidegrees $(1,1)$, $(3,2)$, and $(5,1)$. As before, a further computation shows that these syzygies do determine $d_1$ in this example.
\end{ex}

\begin{rem}
    In each of the previous examples, we see that $U\subseteq H^0(\O_{\P^1\times \P^1}(3,3))$ is basepoint free, $I_U$ has a syzygy in bidegree $(1,1)$, and $\dim V=4$. However, in the examples above, the differential $d_1$ is derived from a different number of syzygies in differing bidegrees. Hence one cannot achieve a result analogous to \Cref{Dim 2 main result thm} or \Cref{Dim 3 main result thm} in this setting, based on $\dim V$ alone.
\end{rem}

\subsection{Syzygies of bidegree \boldmath{$(0,n)$}}

From the behavior in \Cref{11-Syz Example 1,11-Syz Example 2}, it is apparent that the results presented in \Cref{dim 2 section,dim 3 section} are not a consequence of the low total degree of the initial syzygy $Q$ in bidegree $(0,2)$. Rather, their success is likely due to the fact that $Q$ consists of homogeneous entries in the subring $k[u,v]$. In this setting, the ideal of entries of $Q$ is much simpler, as are its syzygies. With this, we briefly discuss the case that $I_U$ has a syzygy of bidegree $(0,n)$. 

Similar to \Cref{General Setting}, we may assume that $n$ is minimal, i.e. $I_U$ has no syzygy in bidegree $(0,m)$ for $m<n$. Moreover, we also assume that $U\subseteq R_{a,b}$ is basepoint free with $b\geq n+1$. Following the approach of \Cref{Quad Syz Section}, we have
$$\sum_{j=0}^3 \big(\sum_{i=0}^n a_{ij} u^{n-i}v^i\big) p_j =0$$
for some $a_{ij}\in k$. Once more, we rearrange this as
$$0=\sum_{j=0}^3 \big(\sum_{i=0}^n a_{ij} u^{n-i}v^i\big) p_j = \sum_{i=0}^n \big( \sum_{j=0}^3 a_{ij} p_j\big) u^{n-i}v^i= \sum_{i=0}^n f_i u^{n-i}v^i$$
where $f_i = \sum_{j=0}^3 a_{ij} p_j$. Thus $[f_0,f_1,\ldots,f_n]$ is a syzygy of the ideal $J=(u^n,u^{n-1}v,\ldots,v^n)$. As a resolution of this ideal is easily produced for any $n$, say by the Hilbert-Burch theorem \cite[20.15]{Eisenbud}, one notes that $\syz(J)$ is free and spanned by the columns of the $(n+1)\times n$ matrix
\[
\begin{bmatrix}
    v& \\
    -u & v\\
    & -u & \scalebox{0.8}{$\ddots$}\\
    & &  \scalebox{0.8}{$\ddots$} & v\\
 & & & -u\\
\end{bmatrix},
\quad \text{and so} \quad
\begin{bmatrix}
    f_0\\
    f_1\\
    \vdots\\
    f_{n-1}\\
    f_n
\end{bmatrix} = 
\begin{bmatrix}
    \alpha_1 v\\
    \alpha_2v -\alpha_1 u \\
    \vdots\\
    \alpha_nv-\alpha_{n-1}u\\
    -\alpha_n u
\end{bmatrix}
\]
for some $\alpha_1,\ldots\alpha_n \in R_{a,b-1}$. Write $V= \Span_k\{f_0,\ldots,f_n\}$ and note that $2\leq\dim V \leq 4$ following the argument of \Cref{Dimension of V}. Moreover, notice that $\dim V$ agrees with the rank of the coefficient matrix $A=(a_{ij})$, as in \Cref{Generation - dim V remark}.

It is suspected, with support from experimentation through \texttt{Macaulay2} \cite{Macaulay2}, that $\dim V$ is the correct invariant to distinguish between the behavior of the syzygies required to determine $\Z_{2a-1,b-1}$ in this setting. If correct, there are three cases to consider, based on the dimension of $V$. However, each case is likely fraught with subcases, similar to \Cref{dim 2 section}, perhaps making the approach presented here impractical. Nevertheless, the author intends to study this setting in a future paper.

Before we conclude this article, we present a conjecture within this setting in the case that $\dim V=2$, based on observations made here.

\begin{conj}
Suppose that $U\subseteq R_{a,b}$ is basepoint free with $b\geq n+1$ and $I_U$ has a syzygy $C$ in bidegree $(0,n)$, and no syzygy in $k[u,v]$ of smaller degree. If $\dim V=2$, then $I_U$ has two syzygies $S_1, S_2$ of bidegree $(a,b-n)$ such that the first differential $d_1$ of $\Z_{2a-1,b-1}$, and hence the implicit equation of $X_U$, is determined by $\{C,S_1,S_2\}$. 
\end{conj}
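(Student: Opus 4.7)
The plan is to generalize \Cref{Dim 2 main result thm} step by step. Substituting $f_i=\alpha_i g_0+\beta_i g_1$ for a basis $\{g_0,g_1\}$ of $V$ into the syzygy relation $\sum_i f_i u^{n-i}v^i=0$, one arrives at
\[
g_0\cdot A+g_1\cdot B=0,\qquad A=\sum_{i=0}^n\alpha_i u^{n-i}v^i,\quad B=\sum_{i=0}^n\beta_i u^{n-i}v^i\in k[u,v]_n.
\]
The first step is to exploit the minimality of $n$ to force $\gcd(A,B)=1$: if instead $A=h\tilde A$ and $B=h\tilde B$ with $h\in k[u,v]_d$ of degree $d\geq 1$, then $g_0\tilde A+g_1\tilde B=0$ in the domain $R$, and expanding $g_0,g_1$ in the basis $\{p_0,p_1,p_2,p_3\}$ yields a nontrivial syzygy of $I_U$ in bidegree $(0,n-d)$, contradicting the choice of $n$. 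Once coprimality is established, $A,B$ form an $R$-regular sequence by flat base change from $k[u,v]$, so the syzygy module of $(A,B)\subseteq R$ is generated by a single Koszul syzygy and $g_0 A+g_1 B=0$ forces $g_0=hB$, $g_1=-hA$ for some $h\in R_{a,b-n}$.

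Reindexing $\{p_0,p_1,p_2,p_3\}$ so that $\{g_0,g_1\}$ plays the role of $\{p_0,p_1\}$ and relabelling the coprime pair as $(A_0,A_1)$, we obtain the direct analogue of \Cref{Dim 2 generation}: $p_0=hA_0$, $p_1=hA_1$ for coprime $A_0,A_1\in k[u,v]_n$, and the original syzygy takes the reduced Koszul form $Q=[A_1,-A_0,0,0]$. Granting the containment $I_U\subseteq (A_0,A_1)\cdot R$ (the analogue of \Cref{Ideal containments}), we may write $p_2=q_0A_0+q_1A_1$ and $p_3=r_0A_0+r_1A_1$ with $q_i,r_i\in R_{a,b-n}$; the identities $q_0p_0+q_1p_1-hp_2=0$ and $r_0p_0+r_1p_1-hp_3=0$ then produce the desired syzygies
\[
S_1=\begin{bmatrix}q_0\\q_1\\-h\\0\end{bmatrix},\qquad S_2=\begin{bmatrix}r_0\\r_1\\0\\-h\end{bmatrix}
\]
of bidegree $(a,b-n)$.

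To conclude, the matrix $M$ with columns $\{Q,S_1,S_2\}$ becomes upper triangular with nonzero diagonal entries $-A_0,-h,-h$ upon deletion of its first row, so its columns span a free submodule of $\syz(I_U)$. A direct monomial count yields $h^0(\O_{\P^1\times\P^1}(2a-1,b-1-n))+2\cdot h^0(\O_{\P^1\times\P^1}(a-1,n-1))=2a(b-n)+2an=2ab$ columns in $M_{2a-1,b-1}$, and \Cref{Botbol det Z,Botbol implicit eqn} then identify the determinant of the resulting $2ab\times 2ab$ matrix representation of $d_1$ as a power of the implicit equation of $X_U$.

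The hard part is establishing the containment $p_2,p_3\in(A_0,A_1)\cdot R$. When $b\geq 2n-1$, the socle of $k[u,v]/(A_0,A_1)$ sits in degree $2n-2$, so $(u,v)^b\subseteq(A_0,A_1)$ in $k[u,v]$ and hence $R_{a,b}\subseteq(A_0,A_1)\cdot R$ automatically. In the intermediate range $n+1\leq b\leq 2n-2$, which is vacuous for $n=2$ but nontrivial for $n\geq 3$, the quotient $k[u,v]_b/(A_0,A_1)_b$ has positive dimension $2n-b-1$, so a priori neither $p_2$ nor $p_3$ need lie in $(A_0,A_1)\cdot R$. Resolving this case will demand a more delicate interplay between the basepoint-free hypothesis $V(I_U)=\emptyset$ and the minimality of $C$, aimed at showing that any nonzero class of $p_2$ or $p_3$ in $k[s,t]_a\otimes\bigl(k[u,v]_b/(A_0,A_1)_b\bigr)$ forces either a basepoint of $U$ or a smaller $k[u,v]$-syzygy. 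It is plausible that such an argument succeeds; otherwise the hypothesis of the conjecture may need to be sharpened to $b\geq 2n-1$.
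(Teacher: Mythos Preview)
The statement you are attempting is a \emph{conjecture} in the paper, not a theorem; the paper offers no proof. It only remarks that the cases $n=1$ and $n=2$ are covered by \cite[2.2]{DS16} and \Cref{Dim 2 main result thm} respectively, and leaves the general statement open. So there is no ``paper's own proof'' to compare against.

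That said, your outline is exactly the natural generalization of the paper's argument for $n=2$, and the steps you carry out are correct. The coprimality of $A$ and $B$ via minimality of $n$, the consequent factorization $g_0=hB$, $g_1=-hA$ with $h\in R_{a,b-n}$, the construction of $S_1,S_2$ from decompositions $p_2=q_0A_0+q_1A_1$ and $p_3=r_0A_0+r_1A_1$, the freeness of the column span of $M$, and the column count
\[
2a(b-n)+2\cdot a n = 2ab
\]
all go through without change from the $n=2$ case.

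You have also put your finger on precisely the obstruction that keeps this a conjecture: the containment $R_{a,b}\subseteq (A_0,A_1)R$. For a regular sequence $A_0,A_1\in k[u,v]_n$ the socle of $k[u,v]/(A_0,A_1)$ lies in degree $2n-2$, so $(u,v)^{2n-1}\subseteq (A_0,A_1)$ and your argument is a complete proof whenever $b\geq 2n-1$. Note that for $n=2$ this threshold is exactly the paper's hypothesis $b\geq 3$, and for $n=1$ it is $b\geq 1$; in other words, the paper's \Cref{Ideal containments} is the special case of your socle argument, and the known instances of the conjecture are precisely those where the range $n+1\leq b\leq 2n-2$ is empty. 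For $n\geq 3$ that range is nonempty, the containment can genuinely fail for arbitrary elements of $R_{a,b}$, and nothing in the paper suggests how basepoint-freeness or minimality of $n$ would force $p_2,p_3\in(A_0,A_1)R$. Your closing assessment is therefore accurate: either a new idea is needed in that range, or the conjecture should be read with the hypothesis $b\geq 2n-1$.
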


We note that this conjecture is true in the case $n=1$ by \cite[2.2]{DS16} following \Cref{lin syz case}, as well as the case that $n=2$ by \Cref{Dim 2 main result thm}. If confirmed, it is curious if there are similar results for the cases $\dim V=3$ and $\dim V=4$, when $I_U$ has a syzygy from the subring $k[u,v]$.


\section*{Acknowledgements}

The use of \texttt{Macaulay2} \cite{Macaulay2} was helpful in the preparation of this article, offering numerous examples to support the results presented here, and verifying the direct computations and equations in \Cref{dim 2 section,dim 3 section}.


\end{document}